\crefname{hypothesis}{Hypothesis}{Hypotheses}
\title{Shape optimization for the Laplacian eigenvalue over triangles and its application to  interpolation error constant estimation}
\author{Ryoki ENDO\thanks{Graduate School of Science and Technology, Niigata University, Niigata, Japan
  (endo@m.sc.niigata-u.ac.jp).}
\and Xuefeng LIU\thanks{Faculty of Science, Niigata University, Niigata, Japan 
  (xfliu@math.sc.niigata-u.ac.jp).}
}
\newcommand{\CR}{{\mbox{\tiny CR}}}
\newcommand{\CG}{{\mbox{\tiny CG}}}
\newcommand{\LIU}[1]{{\color{red}{#1}}}
\begin{document}

\maketitle

\begin{abstract}
A computer-assisted proof is proposed for the Laplacian eigenvalue minimization problems over triangular domains under diameter constraints.
The proof utilizes recently developed guaranteed computation methods for both eigenvalues and eigenfunctions of differential operators. 
The paper also provides an elementary and concise proof of the Hadamard shape derivative, which helps to validate the monotonicity of eigenvalue with respect to shape parameters. Beside the model homogeneous Dirichlet eigenvalue problem, the eigenvalue problem associated with a non-homogeneous Neumann boundary condition, which is related to the Crouzeix--Raviart interpolation error constant, is considered. The computer-assisted proof tells that among the triangles with the unit diameter, the equilateral triangle minimizes the first eigenvalue for each concerned eigenvalue problem.
\end{abstract}

\begin{keywords}
Shape optimization, Triangular domain, the Laplacian eigenvalue, Verified computation, Finite element method
\end{keywords}

\begin{MSCcodes}
49Q10, 35P15
\end{MSCcodes}

\section{Introduction}

Eigenvalue problems of the Laplacian over triangular domains have been well investigated in the long history of mathematics. For example, in the analytical analysis of the Laplacian spectrum, triangular domains have been studied to conjecture behaviors of eigenvalues over general domains; see, e.g., the early work of P{\'o}lya--Szeg\"{o}    \cite{polya1951isoperimetric} and recent results \cite{arbon2022global,henrot2006extremum,henrot2017shape, 534dfed29e6f41038665a25c2e995659, siudeja2010isoperimetric}. In 1951, P{\'o}lya--Szeg\"{o}  show that the first Dirichlet eigenvalue takes the minimum value at a regular $n$-polygon among all the $n$-polygons ($n=3,4$) with a fixed area \cite{polya1951isoperimetric}. As a consequence of this result and the isoperimetric inequality, it is proved that the equilateral $n$-polygon($n=3,4$) minimizes the first Dirichlet eigenvalue among the triangles with a given diameter \cite{arbon2022global,534dfed29e6f41038665a25c2e995659,polya1951isoperimetric,siudeja2010isoperimetric}.
In the field of numerical analysis, the eigenvalue problems of differential operators is utilized to obtain concrete bounds of various interpolation error constants that are desired in the error analysis for finite element methods; see, e.g., \cite{Kikuchi+Liu2007,liu-kikuchi-2010}. 

In this paper, the shape optimization for two Laplacian eigenvalue problems with Dirichlet and Neumann boundary conditions are studied. It is proved that, under the diameter constraint, the regular triangle gives the minimum value of the first Laplacian Eigenvalue among triangular domains. The case of the Dirichlet eigenvalues is a reconfirmation of the consequences of \cite{polya1951isoperimetric}, while the case of the non-homogeneous Neumann eigenvalues is a new result that validates the numerical observation in studying the Crouzeix--Raviart interpolation error constant \cite{liu2015framework}.

The method proposed in this paper has the following novelties.
\begin{itemize}
    \item The technique used in the elementary proof for the Hadamard shape derivative can be used to handle special setting of domain shapes and boundary value conditions. For example, the eigenvalue problem related to the Crouzeix--Raviart interpolation error constant has a non-homogeous boundary value condition.
    \item The utilization of guaranteed estimation of eigenvalues and eigenfunctions of differential operators make it possible to solve the problems that is difficult to process by pure theoretical analysis. Although only triangular domains are discussed in this paper, the  techniques developed here have the possibility to provide the proof for the conjecture of P{\'o}lya--Szeg\"{o} about the shape optimization of $n$-polygon domains ($n\ge 5$). 
\end{itemize}


\medskip

The problem of minimizing the first non-homogeneous Neumann eigenvalue originates from the error analysis for the Crouzeix--Raviart finite element method; see \cite{liu2015framework}.
Given a triangle $T$, 
let $\Pi_h:H^1(T)\to L^2(T)$ be the Crouzeix--Raviart interpolation such that, for $u\in H^1(T)$, 
$\Pi_h u$ is a linear function satisfying
\begin{equation*}
\int_{e_i}\Pi_h u-u~ds=0,~~i=1,2,3,
\end{equation*}
where $e_i$'s stand for the edges of $T$.
The following  error estimation of $\Pi_h u$ is available  with a quantity $C(T)$ that only depends on the shape of $T$:
\begin{equation*}
\|u-\Pi_h u\|_{T} \le C(T) 
\|\nabla(u-\Pi_h u)\|_{T} ~~ \forall u \in H^1(T).
\end{equation*}
Here, the optimal $C(T)$ for a specific $T$ is defined by
\begin{equation}
\label{def:C_T}
C(T):=\sup_{u\in H^1(T)} \frac{\|u-\Pi_h u\|_{T}}{\|\nabla(u-\Pi_h u)\|_{T}}~.    
\end{equation}
Note that the value of $C(T)$ is linearly dependent on the diameter of $T$ and its value is determined by the first eigenvalue of the Laplacian with a certain boundary condition as stated in \eqref{eq:eig-with-neumann-bdc}.

In  \cite{liu2015framework}, it is shown through rigorous numerical computation that 
$C(T)$ has a uniform bound 
$C(T) \le 0.1893 h_T$ for triangles of arbitrary shapes, where $h_T$ denotes the diameter of $T$.
Such a bound plays an important role in applying the Crouzeix--Raviart FEM to obtain the following explicit lower eigenvalue bounds for the Laplacian eigenvalue problems: 
$$
\lambda_{k} \ge \frac{\lambda_{k,h}}{1+(0.1893 h)^2\lambda_{k,h}},\quad 
k=1, \cdots, \mbox{dim}(V_h^{\CR}) ~.
$$
Here, $\lambda_{k}$'s are the Laplacian eigenvalues defined over the objective domain $\Omega$; $V_h^{\CR}$ is the Crouzeix--Raviart FEM space over a triangulation mesh of $\Omega$; 
$\lambda_{k,h}$'s are the approximate eigenvalues solved in $V_h^{\CR}$; $h$ is the mesh size of the triangulation of the domain.

It is of great interest when the equality in the estimation ``$C(T)\le 0.1893 h_T$" holds. Numerical computation implies that, for a triangle element with a fixed diameter, the ``worst case", i.e., the maximum value of $C(T)$ happens when the triangle is equilateral.
However, this statement is not strictly proved yet.
In this paper, by solving the shape optimization of the Laplacian eigenvalue over triangular domains, we will prove that the equilateral triangle gives the maximum value of $C(T)$ among triangles of arbitrary shapes.  
The code for the computer-assisted proof is available at \url{https://ganjin.online/ryoki/ShapeDerivativeEstimation}.

\medskip

{The remainder of this paper is organized as follows: In Section \ref{section:preliminary}, we introduce the objective eigenvalue problems  along with the theorems that provide lower eigenvalue bounds and estimation for eigenvector approximation. In Section \ref{section:main-theories}, we describe the outline of the computer-assisted proof for the shape optimization problem, and the error estimation for the shape derivative of the eigenvalue. In Section \ref{section:numerical-results}, the computation results are presented. Finally, in Section \ref{section:conclusions}, we state our conclusions.}

\section{Preliminary}\label{section:preliminary}

Let us introduce the notation to be used in this paper.
For a triangular domain $T\subset\mathbb{R}^2$, let $V(T)$ be either of the following function spaces:
\begin{gather*}
    V_0(T):=\{v\in H^1(T): v=0 \mbox{ on } \partial T\},\\
    V_e(T):=\{v\in H^1(T): \int_{e_i}v \ \mbox{ds}=0 \mbox{ for }i=1,2,3 \}
\end{gather*}
where $e_1,e_2,e_3$ stand for the distinct edges of $T$, respectively. Let $(\cdot,\cdot)_T$ be the $L^2$--inner product for $L^2(\Omega)$ and $\{L^2(\Omega)\}^2$. 
Note that due to the boundary conditions of the spaces $V_0(T)$ and $V_e(T)$, $(\nabla\cdot,\nabla\cdot)_{T}$ 
is an inner product for each space. The argument in this paper will use the following norms for functions in $V$:
$$
\|u\|_{V(T)}:=(\nabla u,\nabla u)_T^{1/2},\quad
\|u\|_{T}:=(u, u)_T^{1/2}~.
$$
For simplicity, $\|u\|_{T} $ is also abbreviated as  $\|u\|$. 
In this paper, the following eigenvalue problem is considered:

Find $u\in V(T)$ and $\lambda>0$ such that
\begin{equation}
\label{eq:eigenvalue-problem}
(\nabla u,\nabla v)_{T}=\lambda (u,v)_{T} \hspace{1em} \forall v\in V(T).
\end{equation}
The above eigenvalue problem has the following eigenvalue distribution:
\begin{equation*}
0<\lambda_1(T)\leq\lambda_2(T)\leq\lambda_3(T)\leq\cdots
\end{equation*}
In the case of $V_e(T)$, the eigenvalue problem (\ref{eq:eigenvalue-problem}) in the operator formulation is as follows:
\begin{equation}
    \label{eq:eig-with-neumann-bdc}
-\Delta u = \lambda u \mbox{ in } T, \quad 
\left.\frac{\partial u}{\partial n}\right |_{e_i} = c_i ~~(i=1,2,3).
\end{equation}
Here, $c_i$'s are constants to be determined from the variational formulation (\ref{eq:eigenvalue-problem}).

Throughout this paper, the following shape optimization problem is considered:
Find a triangular domain $T^*$ such that
\begin{equation*}\label{shape-optimization-problem}
    T^*\in\mathcal{O},\hspace{1em} \lambda_1(T^*)=\min_{T\in\mathcal{O}}\lambda_1(T),
\end{equation*}
where $\mathcal{O}$ is a class of triangular domains in $\mathbb{R}^2$ with unit diameter.

\medskip

The finite element method (FEM) will be utilized to solve the eigenvalue problem over triangles. Let us introduce the FEM approximation to the eigenvalue problem (\ref{eq:eigenvalue-problem}). Let $h$ be the maximal edge length of $\mathcal{T}^h$. 
Let $V_h^{\CG}(\subset V(T))$ be the discretized space of $V(T)$ through linear conforming  Galerkin FEM. Let $V_h^{\CR}(T)$ the discretized space of $V(T)$ through Crouzeix--Raviart (CR) FEM. For $u_h\in V_h$, the discretized gradient operator, still denoted by $\nabla$, is required. The details of the finite element spaces are provided in (\ref{def:fem-space-cg}), (\ref{def:fem-sapce-cr}).

To estimate upper and lower bounds of the exact eigenvalues $\lambda_k(T)$, the following two discrete eigenvalue problems are considered:

\begin{itemize}
    \item[(a)] Find $u_h\in V^{\CG}_h(T)$ and $\lambda_h>0$  such that
        \begin{equation*}
        (\nabla u_h,\nabla v_h)_{T}=\lambda_h (u_h,v_h)_{T} \hspace{1em} \forall v_h\in V^{\CG}_h(T).
        \end{equation*}
    \item[(b)] Find $u_h\in V_h^{\CR}(T)$ and $\lambda_h>0$  such that
        \begin{equation*}
        (\nabla u_h,\nabla v_h)_{T}=\lambda_h (u_h,v_h)_{T} \hspace{1em} \forall v_h\in V_h^{\CR}(T).
        \end{equation*}
\end{itemize}
Let  $N_{1}=\mbox{dim}(V_h^{\CG}(T))$, $N_2 =\mbox{dim}(V_h^{\CR}(T))$.
The eigenvalues of (a) are denoted by
\begin{equation*}
    (0<)~\lambda_{1,h}^{\CG}(T)\leq\lambda_{2,h}^{\CG}(T) \leq \cdots \leq \lambda_{N_1,h}^{\CG}(T)~,
\end{equation*}
 and the eigenvalues of (b) are denoted by
\begin{equation*}
    (0<)~\lambda_{1,h}^{\CR}(T)\leq\lambda_{2,h}^{\CR}(T)
    \leq \cdots \leq\lambda_{N_2,h}^{\CR}(T)~.
\end{equation*}
By using FEM approximation, we can evaluate eigenvalues with rigorous upper and lower bounds.
\begin{lemma}
\label{lem:l-estimation}
Let $C_h=0.1893h$, where $h$ is the mesh size of $\mathcal{T}^h$.
We have 
\begin{equation*}
    \underline\lambda_k:=\frac{\lambda_{k,h}^{\CR}(T)}{1+C_h^2\lambda_{k,h}^{\CR}(T)}\leq\lambda_k(T)\leq\lambda_{k,h}^{\CG}(T)=:\overline\lambda_k        
    \hspace{1em} 
    ~~ (k=1,2, \cdots, N_2)~.
\end{equation*}    
\end{lemma}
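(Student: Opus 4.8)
The plan is to establish the two inequalities independently: the upper bound $\lambda_k(T)\le\lambda_{k,h}^{\CG}(T)$ by the classical conforming Galerkin argument, and the lower bound $\underline\lambda_k\le\lambda_k(T)$ by the nonconforming framework of \cite{liu2015framework}. For the upper bound I would invoke the Courant--Fischer min--max characterization
\[
\lambda_k(T)=\min_{\substack{W\subset V(T)\\ \dim W=k}}\ \max_{0\neq v\in W}\frac{\|\nabla v\|_T^2}{\|v\|_T^2},
\]
together with the analogous identity for $\lambda_{k,h}^{\CG}(T)$ in which $W$ ranges only over the $k$-dimensional subspaces of $V_h^{\CG}(T)$. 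Since $V_h^{\CG}(T)\subset V(T)$ is conforming, the discrete minimization runs over a subfamily of the admissible subspaces, so the discrete minimum cannot be smaller; this gives $\lambda_k(T)\le\lambda_{k,h}^{\CG}(T)$ at once. This step is routine.

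The lower bound is the substantive part. Let $\Pi_h$ be the element-wise Crouzeix--Raviart interpolation on $\mathcal{T}^h$ and $\nabla_h$ the broken gradient. Two properties drive the estimate. First, a Galerkin-type orthogonality $(\nabla_h(u-\Pi_h u),\nabla_h v_h)_T=0$ for all $v_h\in V_h^{\CR}(T)$: since $\nabla_h v_h$ is piecewise constant, on each element $K$ the contribution equals $\nabla_h v_h|_K\cdot\int_K\nabla(u-\Pi_h u)\,dx$, and $\int_K\nabla(u-\Pi_h u)\,dx=\int_{\partial K}(u-\Pi_h u)\,n\,ds$ vanishes edge by edge from the edge-average condition defining $\Pi_h$. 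This yields the Pythagorean identity $\|\nabla u\|_T^2=\|\nabla_h\Pi_h u\|_T^2+\|\nabla_h(u-\Pi_h u)\|_T^2$. Second, the error estimate $\|u-\Pi_h u\|_T\le C_h\|\nabla_h(u-\Pi_h u)\|_T$ with $C_h=0.1893\,h$, which follows by applying the element bound $C(K)\le0.1893\,h_K\le0.1893\,h$ from \eqref{def:C_T} (as established in \cite{liu2015framework}) on each $K$ and summing over the elements.

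With these ingredients I would pass to reciprocals, where the target reduces to the cleaner inequality $1/\lambda_k(T)\le 1/\lambda_{k,h}^{\CR}(T)+C_h^2$. Writing the dual min--max $1/\lambda_{k,h}^{\CR}(T)=\max_{\dim W_h=k}\min_{0\neq v_h\in W_h}\|v_h\|_T^2/\|\nabla_h v_h\|_T^2$ over subspaces $W_h\subset V_h^{\CR}(T)$, I would test with $W_h=\Pi_h(W^*)$, where $W^*$ is the span of the first $k$ continuous eigenfunctions, on which $\|v\|_T^2/\|\nabla v\|_T^2\ge 1/\lambda_k(T)$. It then suffices to prove, for every $u\in V(T)$ and $u_h:=\Pi_h u$,
\[
\frac{\|u\|_T^2}{\|\nabla u\|_T^2}\ \le\ \frac{\|u_h\|_T^2}{\|\nabla_h u_h\|_T^2}+C_h^2.
\]
I would derive this by combining $\|u\|_T\le\|u_h\|_T+\|u-u_h\|_T\le\|u_h\|_T+C_h\|\nabla_h(u-u_h)\|_T$ with the Pythagorean identity and clearing denominators; after cancellation the remaining inequality is exactly an instance of AM--GM. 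Minimizing over $u\in W^*$ and combining with the dual min--max gives $1/\lambda_k\le 1/\lambda_{k,h}^{\CR}+C_h^2$, and rearranging recovers the stated fractional form $\underline\lambda_k\le\lambda_k(T)$.

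I expect the lower-bound chain, not the upper bound, to be the main obstacle: the nonlinear shape $\lambda_{k,h}^{\CR}/(1+C_h^2\lambda_{k,h}^{\CR})$ is produced precisely by the AM--GM step in the per-function estimate, and some care is needed when $\Pi_h$ fails to be injective on $W^*$. In that degenerate case there is $0\neq u\in W^*$ with $\Pi_h u=0$, whence the two properties above force $\|u\|_T\le C_h\|\nabla u\|_T$ and so $1/\lambda_k(T)\le C_h^2$; the asserted bound then holds trivially because $t/(1+C_h^2 t)<C_h^{-2}$ for every $t>0$. The only genuinely external input is the sharp constant $0.1893$, which is imported from \cite{liu2015framework}.
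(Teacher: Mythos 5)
Your proposal is correct. Note, though, that the paper's own ``proof'' of this lemma is essentially two citations: the upper bound is dispatched by the min--max principle exactly as you do, and the lower bound is simply attributed to \cite{liu2015framework} without reproduction. What you have written is a faithful and complete reconstruction of that cited framework: the edge-average condition gives the Galerkin orthogonality $(\nabla(u-\Pi_h u),\nabla v_h)_T=0$ (so $\Pi_h$ is the broken-$H^1$ projection onto $V_h^{\CR}$), the elementwise bound $C(K)\le 0.1893\,h_K$ sums to $\|u-\Pi_h u\|_T\le C_h\|\nabla(u-\Pi_h u)\|_T$, and the per-function inequality $\|u\|_T^2/\|\nabla u\|_T^2\le\|\Pi_h u\|_T^2/\|\nabla \Pi_h u\|_T^2+C_h^2$ reduces after clearing denominators to an AM--GM step; combined with the dual max--min characterization on $W_h=\Pi_h(W^*)$ this yields $1/\lambda_k\le 1/\lambda_{k,h}^{\CR}+C_h^2$, which rearranges to the stated fractional bound. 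Your treatment of the degenerate case $\dim\Pi_h(W^*)<k$ is also correct and is a point often glossed over. The only detail left implicit is that $\Pi_h$ maps $V(T)$ into $V_h^{\CR}(T)$, i.e.\ that the edge-average conditions propagate both the boundary condition (for $V_0$ and $V_e$) and the midpoint continuity across interior edges; this is immediate from $\int_e\Pi_h u\,ds=\int_e u\,ds$ but should be stated, since the max--min test space must actually lie in the discrete space. The genuinely external input remains, as you say, the verified constant $0.1893$ from \cite{liu2015framework}.
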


\begin{proof}
The lower eigenvalue bounds are provided in \cite{liu2015framework}, while the upper eigenvalue bounds are from the min-max principle since $V_h^{\CG}\subset V_0$.
\end{proof}

The eigenfunction of the eigenvalue problem can also be well approximated by FEM solutions. Let us introduce distances to measure the approximation error of eigenfunctions. Given two subspaces $E$ and $\widehat{E}$ of $V(T)$, the distance 
$\overline{\delta}(E,\widehat E)$ and  $\delta_b(E,\widehat E)$  are defined by
\begin{gather*}
    \overline{\delta}(E,\widehat E):=\max_{\hat v\in \widehat E,\|\hat v\|=1}\min_{v\in E,\|v\|=1}\|\nabla v-\nabla\hat v\|,\\
    \delta_b(E,\widehat E):=\max_{v\in E,\|v\|=1}\min_{\hat v\in\widehat E}\|v-\hat v\|.
\end{gather*}
The lemma below summarizes the results of Theorem 1 and estimation (49) of \cite{liu2022fully}. Note that
the estimation proposed in \cite{liu2022fully} can also handle the case of clustered eigenvalues, which is an extension of the result of \cite{birkhoff1966rayleigh}. Other error estimations with different distances are available in \cite{cances2020guaranteed}.

\begin{lemma}[Estimation 
 for eigenvector approximation]\label{lem:original_eigenvec_estimation}
Let $u\in V(T)$ be an eigenfunction corresponding to $\lambda_1(T)$ with $\|u\|=1$. Let $u_h\in V_h^{\CG}(T)$ be an approximate eigenfunciton of $u$ such that $\|u_h\|=1$.
Let $E:=\mbox{span}\{u\},~~\widehat E:=\mbox{span}\{u_h\}$ and  $\lambda_{1,h}:=\|\nabla u_h\|^2$. 
Assume $\lambda_{1,h}<\rho\leq\lambda_2(T)$. Then we have
\begin{gather}
\label{eq:eig-vec-est}
    \overline{\delta}^2(E,\widehat E)\leq \lambda_1+\lambda_{1,h}-2\lambda_1\sqrt{1-\delta_b^2(E,\widehat E)},~~ \delta^2_b(E,\widehat E)\leq\frac{\lambda_{1,h}-\lambda_1}{\rho-\lambda_1}~.
\end{gather}

\end{lemma}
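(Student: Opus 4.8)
The plan is to exploit the fact that both $E=\mathrm{span}\{u\}$ and $\widehat E=\mathrm{span}\{u_h\}$ are one-dimensional, which collapses the max--min expressions defining $\overline\delta$ and $\delta_b$ into elementary one-variable optimizations. First I would record the two normalizations $\|\nabla u\|^2=\lambda_1$ (obtained by testing the weak eigenequation \eqref{eq:eigenvalue-problem} with $v=u$) and $\|\nabla u_h\|^2=\lambda_{1,h}$ (by definition), together with the key consequence of the conformity $V_h^{\CG}\subset V(T)$: taking $v=u_h$ in \eqref{eq:eigenvalue-problem} yields the identity $(\nabla u,\nabla u_h)_T=\lambda_1(u,u_h)_T$. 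I would also note the elementary bound $\lambda_1\le\lambda_{1,h}$ coming from the Rayleigh quotient characterization of $\lambda_1$ applied to $u_h$.

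Next I would evaluate $\delta_b$. Since the unit sphere of $E$ is $\{\pm u\}$ and every $\hat v\in\widehat E$ has the form $t u_h$, minimizing $\|u-t u_h\|^2=1-2t(u,u_h)_T+t^2$ over $t\in\mathbb{R}$ gives $\delta_b^2(E,\widehat E)=1-(u,u_h)_T^2$, equivalently $(u,u_h)_T^2=1-\delta_b^2(E,\widehat E)$. This single computation is what links both distances to the scalar $(u,u_h)_T$.

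For the first inequality I would compute $\overline\delta$ directly. The unit vectors of $\widehat E$ are $\pm u_h$, and the inner minimization selects the sign alignment minimizing $\|\nabla u_h\mp\nabla u\|^2$; choosing the orientation of $u$ so that $(u,u_h)_T\ge 0$ (permissible, since $\overline\delta$ already optimizes over $\pm u$), I expand
\[
\overline\delta^2(E,\widehat E)=\|\nabla u-\nabla u_h\|^2=\|\nabla u\|^2+\|\nabla u_h\|^2-2(\nabla u,\nabla u_h)_T.
\]
Substituting $\|\nabla u\|^2=\lambda_1$, $\|\nabla u_h\|^2=\lambda_{1,h}$, and $(\nabla u,\nabla u_h)_T=\lambda_1(u,u_h)_T=\lambda_1\sqrt{1-\delta_b^2(E,\widehat E)}$ reproduces exactly the right-hand side of the first bound (indeed with equality in this one-dimensional setting).

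The second inequality is where the spectral separation hypothesis enters, and this is the step requiring the most care. I would expand $u_h$ in an $L^2$-orthonormal basis $\{u_k\}$ of eigenfunctions of \eqref{eq:eigenvalue-problem}, with eigenvalues $\lambda_k$, normalized so that $u_1=u$; the assumptions $\lambda_{1,h}<\rho\le\lambda_2$ together with $\lambda_1\le\lambda_{1,h}$ force $\lambda_1<\lambda_2$, so $\lambda_1$ is simple and this choice is legitimate. Writing $u_h=\sum_k a_k u_k$, Parseval gives $\sum_k a_k^2=\|u_h\|^2=1$ and $\sum_k\lambda_k a_k^2=\|\nabla u_h\|^2=\lambda_{1,h}$, while $\delta_b^2=1-(u,u_h)_T^2=1-a_1^2=\sum_{k\ge2}a_k^2$. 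Using $\lambda_k\ge\rho$ for all $k\ge2$,
\[
\lambda_{1,h}=\lambda_1 a_1^2+\sum_{k\ge2}\lambda_k a_k^2\ge\lambda_1(1-\delta_b^2)+\rho\,\delta_b^2=\lambda_1+(\rho-\lambda_1)\delta_b^2,
\]
and since $\rho>\lambda_1$ this rearranges to $\delta_b^2(E,\widehat E)\le(\lambda_{1,h}-\lambda_1)/(\rho-\lambda_1)$, completing the proof. The only genuine subtlety is justifying the eigenfunction expansion and the simplicity of $\lambda_1$ extracted from the separation assumption; the remainder is bookkeeping with the two Parseval identities.
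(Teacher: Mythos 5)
Your argument is correct, and it is genuinely more than what the paper provides: the paper does not prove Lemma \ref{lem:original_eigenvec_estimation} at all, but simply cites it as a summary of Theorem 1 and estimation (49) of \cite{liu2022fully} (itself an extension of \cite{birkhoff1966rayleigh}). Your derivation is a complete, self-contained proof for the specific one-dimensional situation of the lemma. The two pillars are sound: (i) conformity $V_h^{\CG}(T)\subset V(T)$ lets you test \eqref{eq:eigenvalue-problem} with $v=u_h$ to get $(\nabla u,\nabla u_h)_T=\lambda_1(u,u_h)_T$, which together with $\delta_b^2=1-(u,u_h)_T^2$ turns the expansion of $\|\nabla u-\nabla u_h\|^2$ into exactly the first bound (with equality, which is consistent with the stated inequality); (ii) the spectral expansion $u_h=\sum_k a_k u_k$ with the two Parseval identities and $\lambda_k\ge\rho$ for $k\ge2$ gives the Birkhoff-type bound $\delta_b^2\le(\lambda_{1,h}-\lambda_1)/(\rho-\lambda_1)$. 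Your observation that $\lambda_1\le\lambda_{1,h}<\rho\le\lambda_2$ forces simplicity of $\lambda_1$, so that $E$ is unambiguously the full eigenspace, is exactly the point that makes the one-dimensional reduction legitimate; the cited reference \cite{liu2022fully} is needed in general precisely because it handles clustered eigenvalues and higher-dimensional invariant subspaces, where this collapse is unavailable. The only step you should make explicit if this were to be written out in full is the completeness of the eigenfunction system in both the $L^2$ and the $\|\nabla\cdot\|$ inner products (standard from the compactness of the embedding $V(T)\hookrightarrow L^2(T)$ and the identity $(\nabla u_j,\nabla u_k)_T=\lambda_j\delta_{jk}$), which justifies the identity $\|\nabla u_h\|^2=\sum_k\lambda_k a_k^2$.
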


For $a,b,c\in\mathbb{R}$ with $a\leq b< c$, introduce quantity $\eta(a,b,c)$ such that
\begin{equation}\label{eq:def-eta}
    \eta(a,b,c):=\left\{a+b-2a\sqrt{\frac{c-b}{c-a}}\right\}^{1/2}~.
\end{equation}
Then the estimation (\ref{eq:eig-vec-est}) becomes 
\begin{equation}
\label{eq:eig-vec-est-simp}
\overline{\delta}(E,\widehat E) \leq \eta(\lambda_1,\lambda_{1,h},\rho).
\end{equation}

\begin{remark}

The result of Lemma \ref{lem:original_eigenvec_estimation} is an easy-to-implement method to estimate the eigenfunction approximation error.
For $V_h^\CG$ using piecewise linear polynomial, the optimal convergence rate of $\delta_b$ is $O(h^2)$, while the 
estimation of \eqref{eq:eig-vec-est} only provides a sub-optimal convergence rate as $O(h)$.  To have a sharper bound of $\delta_b$, one can use Algorithm II of \cite{liu2022fully} where the residue error of the approximate eigenfunction is utilized to 
provide optimal estimation with convergence rate as $O(h^2)$. 


\end{remark}

\section{Main theories} \label{section:main-theories}
Let $T$ be a triangular domain with vertices as
$O(0,0)$, $A(1,0)$ and $B(a,b)$.
Since the eigenvalues of the Laplacian are isometry invariant, without loss of generality,  $B$ is further assumed to be located in area $\Omega_0(\subset\mathbb{R}^2)$ defined by
\begin{equation*}
    \Omega_0:=\{(x,y)\in\mathbb{R}^2:x^2+y^2\leq 1, x\geq\frac{1}{2},y>0\}.
\end{equation*}
In case  $B=(\cos\theta,\sin\theta)$,  $\theta=\angle AOB$, let $T^\theta:=T$ and $\lambda^\theta_k:=\lambda_k(T^\theta)$ ($k=1,2,...$). 

Below, we describe the outline of the computer-assisted proof for the shape optimization problem.
Since the value of constant $\lambda_1(T)$  depends on two parameters: $a, b$, the proof for the shape optimization problem consists of the following three steps:

\begin{itemize}
    \item [Step 1] Reduce the parameters $a,b$ to $\theta$ by taking the use of the monotonicity of $\lambda_1$ with respect to $b$. That is, 
\end{itemize}
\begin{equation*}
    \min_{T\in\mathcal{O}}\lambda_1(T)=\min_{\theta\in(0,\frac{\pi}{3}]}\lambda_1(T^\theta)~.
\end{equation*}
\begin{itemize}
    \item [Step 2] For $\theta \in (0, \pi/3-\varepsilon]$ ($\varepsilon$: an explicit value to be determined), the range of $\lambda_1^\theta$ is rigorously estimated to validate that $\lambda_1^\theta>\lambda_1^\frac{\pi}{3}$ for all $\theta\in (0,\pi/3-\varepsilon]$; see Fig.\ref{fig:graph_lambda1}.
    \item [Step 3] For $\theta \in [\pi/3-\varepsilon, \pi/3]$, it is proved that $\lambda_1^\theta$ is monotonically decreasing with respect to parameter $\theta$, where the derivative of the eigenvalue w.r.t. $\theta$ is explicitly estimated; see Fig.\ref{fig:graph_lambda1}.
\end{itemize}

\begin{figure*}[h]
  \begin{minipage}[c]{0.48\linewidth}
    \centering 
    \includegraphics[scale=0.26]{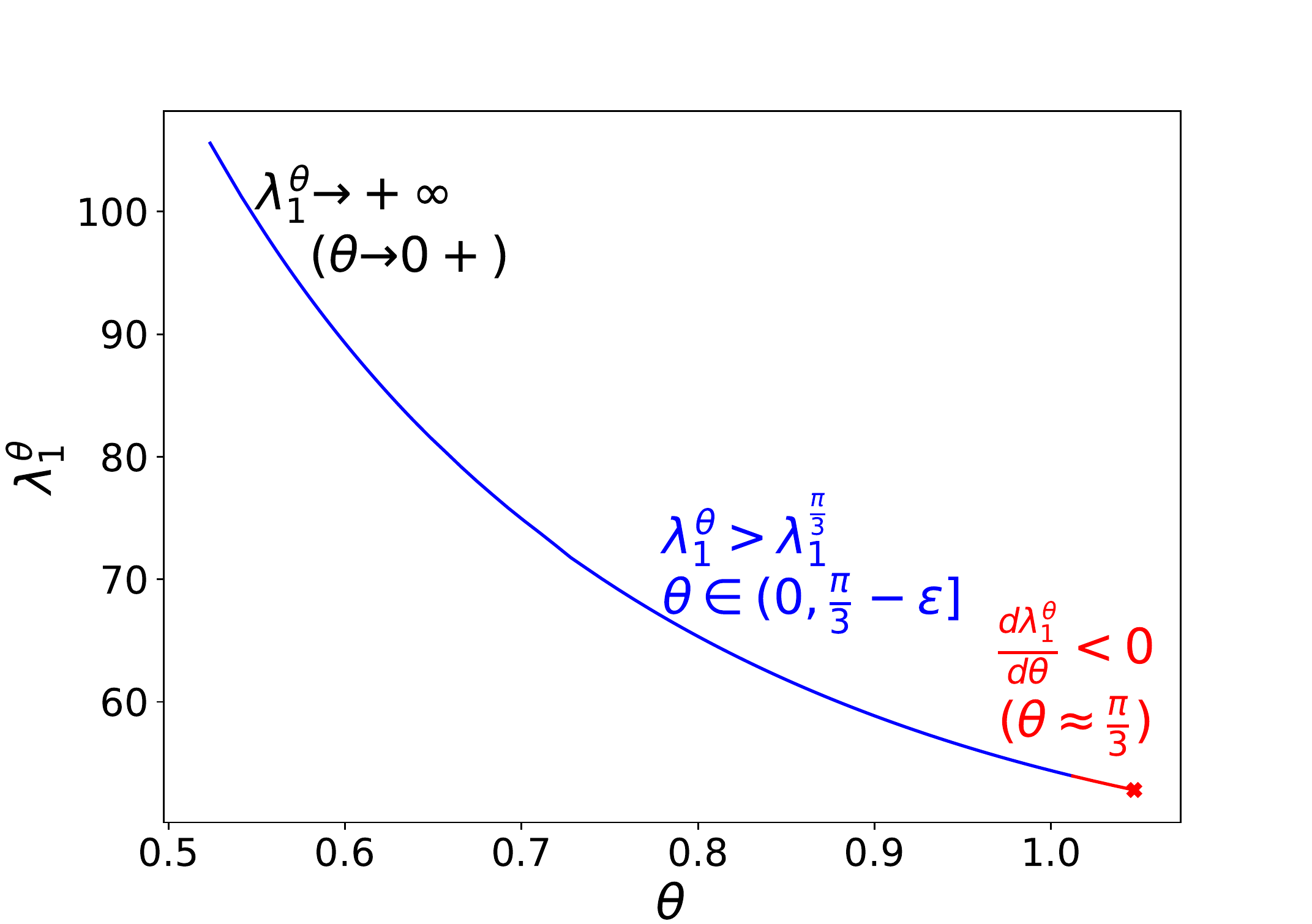}
  \end{minipage}
  \begin{minipage}[c]{0.48\linewidth}
    \centering
    \includegraphics[scale=0.26]{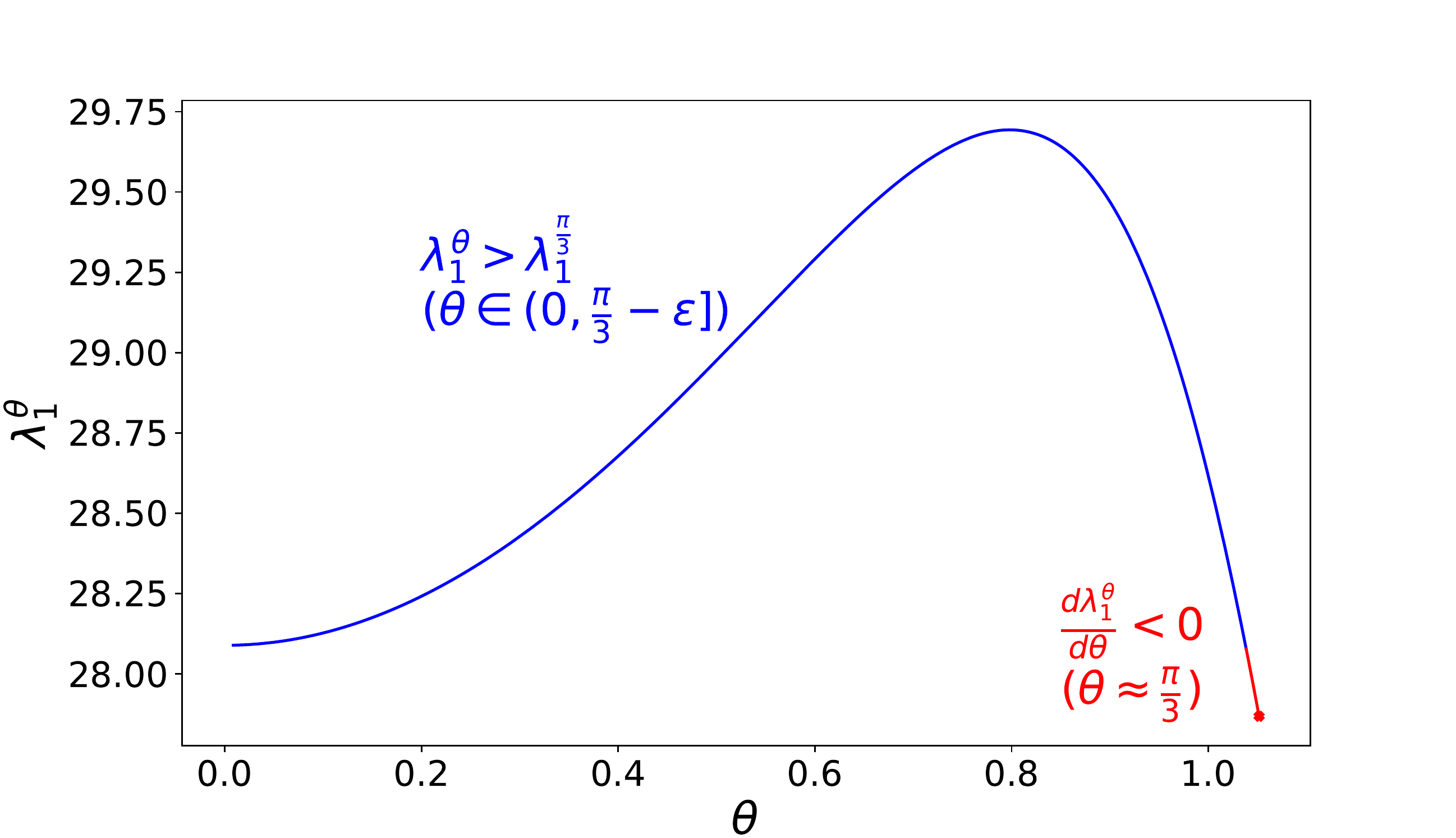}
  \end{minipage}
  \caption{Behavior of $\lambda_1(T^\theta)$ for two eigenvalue problems\\(Left: $V=V_0$; Right: $V=V_e$)}
  \label{fig:graph_lambda1}
\end{figure*}

\subsection{Monotonicity of $\lambda_k$ with respect to $y$-coordinate of $B$}

Let us first introduce a result about the domain monotonicity property of eigenvalues; the details of the proof are provided in Appendix A.
\begin{lemma}\label{lem:vertival_monotonicity}
For fixed $x$-coordinate of $B$, $\lambda_k(T)$ is monotonically decreasing on the $y$-coordinate of vertex $B$.
\end{lemma}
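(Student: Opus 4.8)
The plan is to compare the triangle $T$ with apex $B=(a,b)$ against the triangle $T'$ having apex $B'=(a,b')$, $b'>b$, by means of a single affine map, and then to transport the comparison to the eigenvalues through the Courant--Fischer min--max principle. The virtue of this route is that it treats $V_0(T)$ and $V_e(T)$ on the same footing, which matters because the naive domain-inclusion argument is tailored to the Dirichlet case only.

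First I would write down the affine map $\Phi:T\to T'$ that fixes the base vertices $O,A$ and sends $B$ to $B'$. Since $\Phi(O)=O$, the map is linear, and the constraints $\Phi(A)=A$ and $\Phi(B)=B'$ force $\Phi(x,y)=(x,sy)$ with $s:=b'/b>1$; that is, $\Phi$ is the vertical stretch by $s$, a bi-Lipschitz bijection carrying $T$ onto $T'$ and $\partial T$ onto $\partial T'$ edge by edge. Next I would set up the pullback $v:=w\circ\Phi$ for $w\in V(T')$ and verify that $w\mapsto v$ is a linear bijection of $V(T')$ onto $V(T)$. For $V_0$ this is immediate, since $\Phi$ maps $\partial T$ onto $\partial T'$. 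For $V_e$ the only subtlety is that $\Phi$ is not an isometry; however each edge $e_i$ of $T$ is a straight segment with constant tangent direction, so $\Phi$ rescales its arc length by a constant factor $c_i\neq0$, whence $\int_{e_i'}w\,ds'=c_i\int_{e_i}v\,ds$, and the edge-mean-zero constraint is preserved in both directions.

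With the correspondence in hand, the change of variables $(\xi,\eta)=\Phi(x,y)$ gives $(v,v)_T=s^{-1}(w,w)_{T'}$ and $(\nabla v,\nabla v)_T=s^{-1}\int_{T'}(w_\xi^2+s^2w_\eta^2)\,d\xi\,d\eta$, so the Jacobian factor $s^{-1}$ cancels in the Rayleigh quotient and, since $s^2>1$,
\[
\frac{(\nabla v,\nabla v)_T}{(v,v)_T}=\frac{\int_{T'}(w_\xi^2+s^2w_\eta^2)}{\int_{T'}w^2}\ge\frac{(\nabla w,\nabla w)_{T'}}{(w,w)_{T'}}.
\]
Finally I would feed this pointwise inequality into the min--max characterization $\lambda_k=\min_{\dim W=k}\max_{0\neq v\in W}(\nabla v,\nabla v)_T/(v,v)_T$. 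Because $w\mapsto w\circ\Phi$ is a dimension-preserving bijection that raises every Rayleigh quotient, each $k$-dimensional trial space $W'\subset V(T')$ yields a $k$-dimensional $W\subset V(T)$ with $\max_{v\in W}R_T(v)\ge\max_{w\in W'}R_{T'}(w)$; taking the minimum over all trial spaces preserves the inequality and gives $\lambda_k(T)\ge\lambda_k(T')$, i.e. monotone decrease of $\lambda_k$ in $b$.

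The step I expect to be the main obstacle is the function-space bookkeeping for $V_e$: one must check carefully that the non-isometric map $\Phi$ still preserves the constraint $\int_{e_i}v\,ds=0$, and it is precisely the constancy of the per-edge arc-length scaling $c_i$ that makes this work. For $V_0$ the same conclusion also follows from the elementary containment $T\subset T'$ together with classical Dirichlet domain monotonicity, which I would note as a cross-check; but it is the transformation argument that renders the Neumann-type space $V_e$ tractable, and that is what I would present as the core of the proof.
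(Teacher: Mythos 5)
Your proposal is correct and follows essentially the same route as the paper: the paper's Appendix A proves a general perturbation lemma via the min--max principle, bounding Rayleigh quotients under an affine map $S$ by $\lambda_{\min}(SS^\intercal)$ and $\lambda_{\max}(SS^\intercal)$, and then specializes to the vertical stretch $S=\operatorname{diag}(1,y_2/y_1)$, for which $\lambda_{\min}(SS^\intercal)=1$ — exactly your pointwise inequality $w_\xi^2+s^2w_\eta^2\ge w_\xi^2+w_\eta^2$. Your care with the preservation of the edge-mean-zero constraint for $V_e$ under the non-isometric map likewise mirrors the paper's (briefer) remark on this point.
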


The above lemma implies that the minimum value of $\lambda_1(T)$ is taken when $B$ is on the arc such that $r=1,\theta\in(0,\pi/3]$; see Figure \ref{fig:monotonicity}.
\begin{figure}[ht]
    \centering
    \includegraphics[keepaspectratio, scale=0.35]{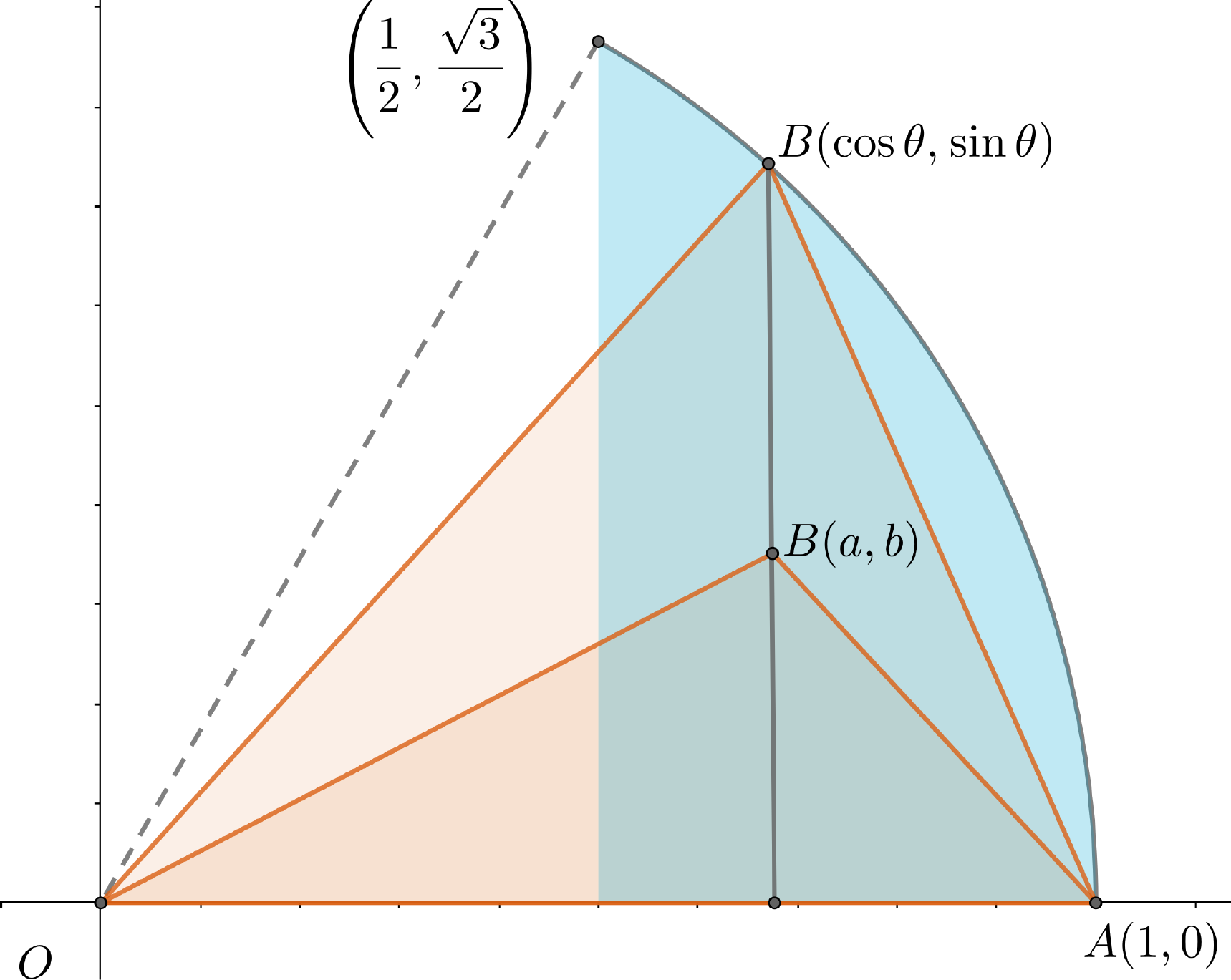}
    \caption{\label{fig:monotonicity} Parameter reduction of the shape optimization problem}
\end{figure}

\subsection{Perturbation of functions with respect to variation of triangles}
Let $T$ be the triangle with  vertices $O(0,0)$, $A(1,0)$ and $B(a,b)$. Let us introduce the perturbation of $T$ by linear transform $\Phi:T\to \widetilde{T}$:
$$
\left(
\begin{array}{c} \tilde{x} \\ \tilde{y} \end{array}
\right)
=Q
\left(
\begin{array}{c} x \\ y \end{array}
\right)
,\quad
Q=\left( \begin{array}{cc}1 &\alpha \\
0 &\beta  \end{array}\right)
\quad (\beta > 0).
$$
For $u$ over $T$, define $\tilde{u}$ over $\widetilde{T}$ by
$\tilde{u}=u\circ\Phi$.
The transpose of $Q$ is denoted by $Q^\intercal$.
Let $\widetilde{\nabla} \tilde{u}:=(\frac{\partial}{\partial \widetilde{x}}\tilde{u},\frac{\partial}{\partial \widetilde{y}}\tilde{u})^\intercal $ be the gradient of $\tilde{u}$.
It holds that $\widetilde{\nabla}\tilde{u}(\tilde x,\tilde y)=Q^{-\intercal}
    \nabla u(x,y)$.
Let  
$\lambda_{\min}(\cdot) $ and $\lambda_{\max}(\cdot)$ denote the minimum and the maximum eigenvalues of a given square symmetric matrix, respectively.

\medskip

Below, let us confirm the properties for the perturbation of $u$; see  \cite{Liao-2019} for a detailed proof. 
\begin{lemma}\label{lemma2} Given function $u$ over $T$, define $\tilde{u}=u\circ Q^{-1}$ over $\widetilde{T}$. 
\begin{enumerate}
\item [(a)]  For $L^2(T)$-norm, we have
$$
\|\tilde u\|^2_{\widetilde T}=\beta\|u\|^2_{T}\:.
$$
\item [(b)]
  For $H^1(T)$-norm, we have
  \begin{equation}
\label{eq:variation-h1-semi-norm}    \displaystyle\lambda_{\min}(QQ^\intercal)\|\nabla\tilde  u\|^2_{\widetilde T}\leq \beta \|\nabla u\|^2_{T}\leq\lambda_{\max}(QQ^\intercal)\|\nabla\tilde u\|^2_{\widetilde T}\:~.
  \end{equation}
  Let $\gamma=\alpha^2+\beta^2+1$. The eigenvalues of $QQ^\intercal$ are given by
  \begin{equation*}
    \lambda_{\min}(QQ^\intercal)=\frac{\gamma-\sqrt{\gamma^2-4\beta^2}}{2},\quad
    \lambda_{\max}(QQ^\intercal)=\frac{\gamma+\sqrt{\gamma^2-4\beta^2}}{2}.
  \end{equation*}

\item [(c)]
   For the quantities involving the first derivative, we have
\begin{equation}
    \label{eq:relation_d1_terms}
\left(
\begin{array}{c}
(\tilde{u}_{\tilde{x}}, \tilde{u}_{\tilde{x}})_{\widetilde{T}}
\\
(\tilde{u}_{\tilde{x}}, \tilde{u}_{\tilde{y}})_{\widetilde{T}}\\
(\tilde{u}_{\tilde{y}}, \tilde{u}_{\tilde{y}})_{\widetilde{T}}
\end{array}
\right)
=\left(
\begin{array}{ccc}
\beta & 0 & 0  \\
-\alpha & 1 & 0 \\
\alpha^2\beta^{-1} & -2\alpha\beta^{-1} & \beta^{-1} 
\end{array}
\right)
\left(
\begin{array}{c}
({u}_{x}, {u}_{x})_T   \\
({u}_{x}, {u}_{y})_T   \\ 
({u}_{y}, {u}_{y})_T  
\end{array}
\right)~.
\end{equation}
\end{enumerate}
\end{lemma}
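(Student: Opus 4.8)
The plan is to obtain all three statements from two ingredients already in hand: the change-of-variables formula for $\Phi$, whose Jacobian determinant is $\det Q=\beta$, and the pointwise gradient identity $\widetilde\nabla\tilde u=Q^{-\intercal}\nabla u$ recorded just above the lemma. Part~(a) follows directly by substituting $(\tilde x,\tilde y)^\intercal=Q(x,y)^\intercal$ and using $\tilde u(\tilde x,\tilde y)=u(x,y)$:
\[
\|\tilde u\|_{\widetilde T}^2=\int_{\widetilde T}|\tilde u|^2\,d\tilde x\,d\tilde y=\int_T |u|^2\,|\det Q|\,dx\,dy=\beta\,\|u\|_T^2 .
\]

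For part~(b), I would feed the gradient identity into the same change of variables. Since $(Q^{-\intercal}\nabla u)^\intercal(Q^{-\intercal}\nabla u)=\nabla u^\intercal (Q^\intercal Q)^{-1}\nabla u$, this yields
\[
\|\widetilde\nabla\tilde u\|_{\widetilde T}^2=\beta\int_T \nabla u^\intercal (Q^\intercal Q)^{-1}\nabla u\,dx\,dy .
\]
Bounding the symmetric quadratic form by its extreme eigenvalues (the Rayleigh-quotient estimate) and using that $Q^\intercal Q$ and $QQ^\intercal$ share the same spectrum, so that the eigenvalues of $(Q^\intercal Q)^{-1}$ lie between $1/\lambda_{\max}(QQ^\intercal)$ and $1/\lambda_{\min}(QQ^\intercal)$, gives
\[
\frac{\beta}{\lambda_{\max}(QQ^\intercal)}\|\nabla u\|_T^2\le\|\widetilde\nabla\tilde u\|_{\widetilde T}^2\le\frac{\beta}{\lambda_{\min}(QQ^\intercal)}\|\nabla u\|_T^2 ,
\]
which is exactly \eqref{eq:variation-h1-semi-norm} after rearrangement. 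The explicit eigenvalues are read off from $QQ^\intercal=\begin{pmatrix}1+\alpha^2 & \alpha\beta\\ \alpha\beta & \beta^2\end{pmatrix}$, whose trace is $\gamma=\alpha^2+\beta^2+1$ and determinant is $\beta^2$; solving the characteristic quadratic $t^2-\gamma t+\beta^2=0$ produces the stated formulas.

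For part~(c), I would write out $Q^{-\intercal}=\begin{pmatrix}1 & 0\\ -\alpha/\beta & 1/\beta\end{pmatrix}$ and read off the pointwise relations $\tilde u_{\tilde x}=u_x$ and $\tilde u_{\tilde y}=-\tfrac{\alpha}{\beta}u_x+\tfrac{1}{\beta}u_y$. Forming the three products $\tilde u_{\tilde x}\tilde u_{\tilde x}$, $\tilde u_{\tilde x}\tilde u_{\tilde y}$, $\tilde u_{\tilde y}\tilde u_{\tilde y}$, multiplying by the Jacobian factor $\beta$, and collecting the coefficients of $(u_x,u_x)_T$, $(u_x,u_y)_T$, $(u_y,u_y)_T$ reproduces the $3\times 3$ matrix in \eqref{eq:relation_d1_terms} line by line.

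There is no deep obstacle here; the argument is linear algebra combined with a single change of variables. The one point requiring care is in part~(b): the quadratic form produced by $Q^{-\intercal}$ naturally involves $(Q^\intercal Q)^{-1}$, so to phrase the two-sided bound in terms of $QQ^\intercal$ as the lemma does, one must invoke the isospectrality of $Q^\intercal Q$ and $QQ^\intercal$ (equivalently, the singular-value interpretation), and track the single Jacobian factor $\beta$ consistently across parts~(a)--(c).
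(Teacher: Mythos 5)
Your proof is correct and follows essentially the same route as the paper: the Jacobian factor $\beta$ for part (a), the gradient transformation rule combined with a Rayleigh-quotient bound for part (b), and direct expansion of the transformed partial derivatives for part (c). The only cosmetic difference is that the paper substitutes $\nabla u=Q^\intercal\widetilde{\nabla}\tilde u$ so the relevant quadratic form is $QQ^\intercal$ directly, which avoids your detour through $(Q^\intercal Q)^{-1}$ and the isospectrality observation.
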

\begin{proof}
The equality of (a) is evident. 
Since 
\(\nabla u=Q^\intercal \nabla \tilde{u}\), we have \begin{equation*}
\lambda_{\min}(QQ^\intercal)\cdot(\tilde u_{\tilde x}^2+\tilde u_{\tilde y}^2)\le u_x^2+u_y^2\le\lambda_{\max}(QQ^\intercal)\cdot(\tilde u_{\tilde x}^2+\tilde u_{\tilde
{y}}^2)~.
\end{equation*}
Noting that $\text{d}\tilde{x}\text{d}\tilde{y}
= \beta \text{d}x\text{d}y$ holds for the integrates over $T$ and $\widetilde{T}$, we obtain \eqref{eq:variation-h1-semi-norm}.
The relation of \eqref{eq:relation_d1_terms} can be shown with an analogous argument. 

\end{proof}

Next, let us consider a concrete transformation 
 $\Phi_{\theta,\tilde\theta}$ that maps
$T^\theta$ to $T^{\tilde{\theta}}$:  for $\theta,\tilde\theta\in(0,\pi)$, the transformation matrix is given by
\begin{equation*}
    S_{\theta,\tilde\theta}:=
    \begin{pmatrix}
     1 & (\cos\tilde\theta-\cos\theta)/\sin\theta\\
     0 &  \sin\tilde\theta/\sin\theta
    \end{pmatrix}.
\end{equation*}

\begin{lemma}[Eigenvalue perturbation; Extension of Theorem 4.2
 of \cite{liu2015framework}\label{perturbation}]
For $\theta,\tilde\theta\in (0,\pi)$, let $\widetilde {B}(\cos\tilde\theta,\sin\tilde\theta)$ be a perturbation of $B(\cos\theta,\sin\theta)$.

Then, we have
\begin{equation}\label{eq:perturbation}
\min\left\{\frac{\cos\tilde\theta-1}{\cos\theta-1},\frac{\cos\tilde\theta+1}{\cos\theta+1}\right\}
\cdot\lambda^{\theta}_k
\leq
\lambda^{\tilde{\theta}}_k\leq
\max\left\{\frac{\cos\tilde\theta-1}{\cos\theta-1},\frac{\cos\tilde\theta+1}{\cos\theta+1}\right\}
\cdot\lambda^{\theta}_k.
\end{equation}

\end{lemma}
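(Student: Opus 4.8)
The plan is to compare the Rayleigh quotients on the two triangles through the affine map $Q=S_{\theta,\tilde\theta}$ (which sends $T^\theta$ onto $T^{\tilde\theta}$, with $\alpha=(\cos\tilde\theta-\cos\theta)/\sin\theta$ and $\beta=\sin\tilde\theta/\sin\theta>0$ in the notation of Lemma~\ref{lemma2}) and then to invoke the Courant--Fischer min--max principle. Writing $T=T^\theta$, $\widetilde T=T^{\tilde\theta}$ and $\tilde u=u\circ Q^{-1}$, the first thing I would establish is that $u\mapsto\tilde u$ is a linear bijection of $V(T)$ onto $V(\widetilde T)$ for each admissible space. For $V_0$ this is automatic, since $Q$ maps $\partial T$ onto $\partial\widetilde T$. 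For $V_e$ --- the space responsible for the Crouzeix--Raviart constant, and the reason this lemma extends Theorem~4.2 of \cite{liu2015framework} beyond the Dirichlet case --- I would note that $Q$ carries each edge $e_i$ affinely onto an edge $\tilde e_i$, rescaling arc length by a positive constant; hence $\int_{\tilde e_i}\tilde v\,\mathrm{d}\tilde s$ is a positive multiple of $\int_{e_i}v\,\mathrm{d}s$, so the zero-mean condition transfers in both directions.

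With the bijection in hand, Lemma~\ref{lemma2}(a) gives $\|\tilde u\|^2_{\widetilde T}=\beta\|u\|^2_{T}$, while Lemma~\ref{lemma2}(b) traps $\beta\|\nabla u\|^2_{T}$ between $\lambda_{\min}(QQ^\intercal)\|\nabla\tilde u\|^2_{\widetilde T}$ and $\lambda_{\max}(QQ^\intercal)\|\nabla\tilde u\|^2_{\widetilde T}$. Dividing these shows that the Rayleigh quotient of $\tilde u$ on $\widetilde T$ equals that of $u$ on $T$ up to the factors $1/\lambda_{\max}(QQ^\intercal)$ and $1/\lambda_{\min}(QQ^\intercal)$, uniformly in $u$. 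Since $\widetilde E=\{u\circ Q^{-1}:u\in E\}$ is a dimension-preserving bijection of subspaces, applying $\min_{\dim=k}\max$ to this uniform two-sided comparison and using Courant--Fischer transfers the same two constants to the eigenvalues:
\begin{equation*}
\frac{\lambda^\theta_k}{\lambda_{\max}(QQ^\intercal)}\le\lambda^{\tilde\theta}_k\le\frac{\lambda^\theta_k}{\lambda_{\min}(QQ^\intercal)}.
\end{equation*}

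Finally I would identify the extreme eigenvalues of $QQ^\intercal$ without expanding the radical in Lemma~\ref{lemma2}(b). Since $\operatorname{tr}(QQ^\intercal)=1+\alpha^2+\beta^2=\gamma$ and $\det(QQ^\intercal)=(\det Q)^2=\beta^2$, a short computation with $\sin^2=1-\cos^2$ shows that $\tfrac{\cos\tilde\theta-1}{\cos\theta-1}$ and $\tfrac{\cos\tilde\theta+1}{\cos\theta+1}$ have sum $\gamma$ and product $\beta^2$, hence are exactly $\lambda_{\min}(QQ^\intercal)$ and $\lambda_{\max}(QQ^\intercal)$. Substituting into the sandwich above yields the two-sided estimate \eqref{eq:perturbation}, the bounding factors being $1/\lambda_{\max}(QQ^\intercal)$ and $1/\lambda_{\min}(QQ^\intercal)$, that is the reciprocals $\tfrac{\cos\theta\pm1}{\cos\tilde\theta\pm1}$ of those ratios; this reciprocal orientation is the one consistent with the degenerate limit $\tilde\theta\to0^+$, where $T^{\tilde\theta}$ collapses to the segment $OA$ so that (e.g.\ for $V_0$) $\lambda^{\tilde\theta}_1\to\infty$ while $\lambda_{\min}(QQ^\intercal)\to0$, forcing the upper factor to diverge. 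I expect the only genuine obstacle to be the bijection step for $V_e$: the singular-value distortion of the Rayleigh quotient and the min--max transfer are routine, whereas preserving the non-homogeneous edge constraint under $Q$ is precisely what carries the classical Dirichlet argument over to the Crouzeix--Raviart setting.
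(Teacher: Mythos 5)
Your argument is essentially the paper's own proof, which is carried out in Appendix~A: pull functions back through the affine map, observe that the $V_0$ and $V_e$ constraints are preserved (the appendix notes exactly your point that each edge is carried affinely onto an edge, so the zero-mean condition transfers), sandwich the Rayleigh quotient between the extreme eigenvalues of $QQ^\intercal$, and transfer the two constants through Courant--Fischer after checking that subspace dimension is preserved (the paper's Lemma~\ref{lem:k-dimensional}); your trace/determinant identification of the spectrum of $QQ^\intercal$ is only a cosmetic variant of the explicit formulas in Lemma~\ref{lemma2}(b). The one substantive point is the orientation issue you flag at the end, and you are right about it: with $Q=S_{\theta,\tilde\theta}$ mapping $T^\theta$ onto $T^{\tilde\theta}$, the min--max transfer yields $\lambda_k^\theta/\lambda_{\max}(QQ^\intercal)\le\lambda_k^{\tilde\theta}\le\lambda_k^\theta/\lambda_{\min}(QQ^\intercal)$, so the bounding factors are the reciprocals $\frac{\cos\theta\pm1}{\cos\tilde\theta\pm1}$, not the ratios printed in \eqref{eq:perturbation}; the two versions coincide only when $\lambda_{\min}\lambda_{\max}=\det(QQ^\intercal)=\sin^2\tilde\theta/\sin^2\theta$ equals $1$. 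Your degenerate-limit check is decisive: as $\tilde\theta\to0^+$ the printed upper factor tends to the finite value $2/(\cos\theta+1)$ while $\lambda_1^{\tilde\theta}\to\infty$ in the Dirichlet case, so \eqref{eq:perturbation} as written cannot hold for all $\theta,\tilde\theta\in(0,\pi)$. The paper's own appendix corollary contains the same slip: it computes the eigenvalues of $SS^\intercal$ for $S:T^\theta\to T^{\tilde\theta}$ and then invokes Lemma~\ref{thm:general-perturbation}, which in fact delivers the reciprocal factors. In short, your proof is correct and follows the paper's route, but what it establishes is the corrected statement (equivalently, \eqref{eq:perturbation} with $\theta$ and $\tilde\theta$ interchanged inside the two ratios), which is the version the subsequent interval arguments actually require.
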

\begin{proof}
This result is an extension of the estimation of Theorem 4.2 in \cite{liu2015framework}, where only the first eigenvalue is considered. The detailed proof for the general $k$-th eigenvalue is provided in the appendix.
\end{proof}

\begin{remark}\label{rem:h-perturbation}
The results in Lemma \ref{perturbation} are also valid for discretized eigenvalues $\lambda_{k,h}^{\CR}$, $\lambda_{k,h}^{\CG}$.
\end{remark}

\medskip

Given eigenvalue $\lambda^\theta_k$, denote by $u^\theta$ an $L^2$-normalized  eigenfunction associated to $\lambda^\theta_k$. 
Note that $u^\theta$ is not uniquely defined due to the sign of the eigenfunction and the multiplicity of the eigenvalue. 

Below, we consider a sequence  $\{\theta_i\}$ which converges to $\theta$.
For the $L^2$ normalized eigenfunction $u^{\theta_i}$, 
the Rayleigh quotient $R(u^{\theta_i})=\|\nabla u^{\theta_i}\|^2_{T^{\theta_i}}=\lambda_{k}^{\theta_i}$ and the continuity of the eigenvalue $\lambda_{k}^{\theta_i}$ w.r.t. $\theta_i$ tell that 
$$
\lim_{i\to\infty}\|\nabla u^{\theta_i}\|^2_{T^{\theta_i}}=\lim_{i\to\infty}\lambda^{\theta_i}_k=\lambda^\theta_k~.
$$
Define 
$\tilde{u}_i:=u^{\theta_i}\circ\Phi_{\theta,\theta_i}$. Since the boundary value condition of $u^\theta_i$ in either $V_0(T^{\theta_i})$ or $V_e(T^{\theta_i})$ is well preserved, we have  $\tilde u_i\in V(T^\theta)$.

Note that the mapping matrix 
$S_{\theta,\theta_i}$ converges to the identity matrix strongly, which leads to the following properties about $\tilde{u}_i$.

\begin{equation}
    \label{eq:unit-norm-of-u0}
    \lim_{i\to\infty}\|\tilde{u}_i\|^2_{T^{\theta}}
    =
\lim_{i\to\infty}\|u^{\theta_i}\|^2_{T^{\theta_i}}
    =1~,
\end{equation}
\begin{equation}
\label{seq_convergence}
\lim_{i\to\infty}\|\nabla \tilde{u}_i \|^2_{T^\theta}=\lim_{i\to\infty}\|\nabla u^{\theta_i}\|^2_{T^{\theta_i}}=\lambda^\theta_k~.
\end{equation}
The convergence (\ref{eq:unit-norm-of-u0}) and (\ref{seq_convergence}) imply the boundedness of the sequence 
$\{\tilde u^i\}_i$ of $V(T^\theta)$ under both $\|\cdot\|_{V(T^\theta)}$ and $\|\cdot\|_{T^\theta}$ norms.





\subsection{Derivative of simple eigenvalue}
To prove that the first eigenvalue $\lambda_1^\theta$ is monotonically decreasing w.r.t. $\theta$ on certain intervals, let us introduce several lemmas that help to estimate the derivative.

\begin{lemma}\label{convergence-existence}
Let $\{\theta_i\}^\infty_{i=1}\subset(0,\pi)$ be a sequence  convergent to $\theta\in(0,\pi)$. 
For any sequence of eigenpairs $\{(\lambda_k^{\theta_i},u^{\theta_i})\}_{i=1}^\infty$, 
there exist a subsequence $\{u^{\theta_{i_j}}\}_{j=1}^\infty$ and an eigenfunction $u_0\in V(T^\theta)$ corresponding to $\lambda_k^\theta$ such that, 
\begin{equation}
\label{eq:lem-congernce-eig-func}
    \|\nabla u_0\|_{T^\theta}^2=\lambda^\theta_k,~\|u_0\|_{T^\theta}=1, \quad    \lim_{j\to\infty}\|\tilde{u}_{i_j}-u_0\|_{V(T^{\theta})}=0~.
\end{equation}
\end{lemma}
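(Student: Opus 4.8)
The plan is to establish Lemma \ref{convergence-existence} by a compactness-plus-closedness argument on the fixed reference domain $T^\theta$. Recall that the sequence $\{\tilde u_i\}_i=\{u^{\theta_i}\circ\Phi_{\theta,\theta_i}\}_i$ lives entirely in the single Hilbert space $V(T^\theta)$, and from \eqref{eq:unit-norm-of-u0} and \eqref{seq_convergence} it is bounded in both $\|\cdot\|_{T^\theta}$ and $\|\cdot\|_{V(T^\theta)}$. The key structural fact I would exploit is that the embedding $V(T^\theta)\hookrightarrow L^2(T^\theta)$ is compact (Rellich--Kondrachov on a bounded Lipschitz domain, restricted to either closed subspace $V_0$ or $V_e$). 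Hence I would first extract a subsequence $\{\tilde u_{i_j}\}_j$ that converges weakly in $V(T^\theta)$ to some limit $u_0$ and strongly in $L^2(T^\theta)$ to the same $u_0$; since each closed subspace $V(T^\theta)$ is weakly closed, $u_0\in V(T^\theta)$, and strong $L^2$ convergence together with \eqref{eq:unit-norm-of-u0} yields $\|u_0\|_{T^\theta}=1$, so $u_0\neq0$.

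Next I would identify $u_0$ as an eigenfunction for $\lambda_k^\theta$ and upgrade weak convergence to strong $V$-convergence. To see that $u_0$ is a weak solution, I would pass to the limit in the variational equation. For each $i_j$, the eigenpair satisfies $(\nabla u^{\theta_{i_j}},\nabla v)_{T^{\theta_{i_j}}}=\lambda_k^{\theta_{i_j}}(u^{\theta_{i_j}},v)_{T^{\theta_{i_j}}}$ for all $v\in V(T^{\theta_{i_j}})$; pulling back through $\Phi_{\theta,\theta_{i_j}}$ and using the change-of-variable relations of Lemma \ref{lemma2} (parts (a) and (c)), this becomes a variational identity on $T^\theta$ whose coefficient matrix is built from $S_{\theta,\theta_{i_j}}$. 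Since $S_{\theta,\theta_{i_j}}\to I$ as $\theta_{i_j}\to\theta$ and $\lambda_k^{\theta_{i_j}}\to\lambda_k^\theta$ by continuity, I would take $j\to\infty$ and use weak $V$-convergence of $\tilde u_{i_j}$ against a fixed test function $v\in V(T^\theta)$ to obtain $(\nabla u_0,\nabla v)_{T^\theta}=\lambda_k^\theta(u_0,v)_{T^\theta}$, i.e. $u_0$ is an eigenfunction for $\lambda_k^\theta$. Testing this with $v=u_0$ gives $\|\nabla u_0\|_{T^\theta}^2=\lambda_k^\theta\|u_0\|_{T^\theta}^2=\lambda_k^\theta$. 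Finally, to promote weak to strong convergence, I would compare norms: weak lower semicontinuity gives $\|\nabla u_0\|_{T^\theta}^2\le\liminf_j\|\nabla\tilde u_{i_j}\|_{T^\theta}^2=\lambda_k^\theta$ by \eqref{seq_convergence}; combined with the just-proved equality $\|\nabla u_0\|_{T^\theta}^2=\lambda_k^\theta$, the $V$-seminorms converge, $\|\nabla\tilde u_{i_j}\|_{T^\theta}\to\|\nabla u_0\|_{T^\theta}$. Weak convergence plus convergence of norms in a Hilbert space forces strong convergence, giving $\lim_j\|\tilde u_{i_j}-u_0\|_{V(T^\theta)}=0$.

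The main obstacle I anticipate is handling the fact that the domain itself is moving: the eigenpairs $(\lambda_k^{\theta_i},u^{\theta_i})$ live on the varying domains $T^{\theta_i}$, and the variational identities are posed on different spaces $V(T^{\theta_i})$, so one cannot directly compare test functions or pass to a limit without first transporting everything to the fixed domain $T^\theta$ via $\Phi_{\theta,\theta_i}$. The care needed is to verify that after pullback, (i) the transported functions $\tilde u_i$ genuinely lie in $V(T^\theta)$ — which holds because the affine map $\Phi_{\theta,\theta_i}$ maps edges to edges and preserves the zero-trace or zero-edge-average conditions defining $V_0$ and $V_e$, as already noted before \eqref{eq:unit-norm-of-u0} — and (ii) the pulled-back bilinear forms converge to the form on $T^\theta$ with the right coefficients. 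The convergence $S_{\theta,\theta_i}\to I$ makes the coefficient matrices in Lemma \ref{lemma2}(c) converge to the identity pattern, so the transported bilinear form converges uniformly on bounded sets; this is what lets the limit pass cleanly. A secondary subtlety is the multiplicity of $\lambda_k^\theta$: the argument does not claim $u_0$ is any prescribed eigenfunction, only that the subsequential limit is some unit eigenfunction in the eigenspace of $\lambda_k^\theta$, which is exactly what the statement asserts, so no further normalization of the eigenspace is required.
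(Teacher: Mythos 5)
Your proposal is correct and follows essentially the same route as the paper's proof: extract a weakly $V$-convergent, strongly $L^2$-convergent subsequence via Rellich--Kondrachov, pass to the limit in the pulled-back variational identity using $S_{\theta,\theta_i}\to I$ to identify $u_0$ as a unit eigenfunction, and then combine weak convergence with convergence of the $V$-norms (from \eqref{seq_convergence} and $\|\nabla u_0\|_{T^\theta}^2=\lambda_k^\theta$) to get strong convergence. No gaps.
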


\begin{proof}
First, let us list the  properties directly after the boundedness of the sequence 
$\{\tilde u_{i}\}_i$ under $\|\cdot\|_{V(T^\theta)}$ norm.
The Rellich--Kondrashov theorem makes certain the existence of a sub-sequence 
$\{\tilde{u}_{i_j}\}_j$ and  $u_0\in V(T^\theta)$ that have the weak convergence  in $V(T^\theta)$:
\begin{equation}
    \label{weak-convergence}
    \lim_{j\to\infty}(\nabla\tilde{u}_{i_j},\nabla v)_{T^\theta}=(\nabla u_0,\nabla v)_{T^\theta} ~\forall v\in V(T^\theta)~,
\end{equation}
and the strong convergence 
 in $L^2(T^\theta)$:
    \begin{equation}
\lim_{j\to\infty}\|\tilde{u}_{i_j}-u_0\|_{T^\theta}=0~.
\end{equation}
Also, the convergence (\ref{eq:unit-norm-of-u0}) tells that $\lim_{i\to\infty}\|\tilde{u}_{i_j}\|^2_{T^{\theta}}
    =\|u_0\|^2_{T^\theta}=1 $.

Next, we show that $u_0$ is also an  eigenfunction. Take an arbitrary test function $v\in V(T^\theta)$. Let $\tilde{v}^{\theta_i} = v\circ \Phi^{-1}_{\theta, \theta_{i}} \in 
V(T^{\theta_{i_j}})$. 
For eigenfunction  $u^{\theta_{i}} \in V(T^{\theta_{i}})$,  we have
\begin{equation}
\label{eq:eig_at_theta_i}
    (\nabla u^{\theta_{i}},\nabla \tilde  v^{\theta_{i}})_{T^{\theta_{i}}}=\lambda^{\theta_{i}}_k(u^{\theta_{i}},  \tilde v^{\theta_i})_{T^{\theta_{i}}}.
\end{equation}
The strong convergence of $\{S_{\theta,\theta_i}\}$ and the week convergence \eqref{weak-convergence} enable the following limit:
$$
\lim_{j\to \infty} (\nabla u^{\theta_{i_j}},\nabla\tilde{v}^{\theta_{i_j}})_{T^{\theta_{i_j}}} =
\lim_{j\to \infty} (\nabla \tilde{u}_{i_j},\nabla  v)_{T^{\theta}} 
=(\nabla u_0, \nabla v)_{T^{\theta}} ~.
$$
By taking the limit for the two sides of \eqref{eq:eig_at_theta_i}, we have
\begin{equation}\label{eq:convergent-to-eigenfunc}
    (\nabla u_0,\nabla  v)_{T^\theta}=\lambda^{\theta}_k(u_0,v)_{T^\theta}~.
\end{equation}
From the arbitrariness of $v$,  it is clear that $u_0$ is an eigenfunction corresponding to $\lambda^\theta_k$ and 
$\|\nabla u_0\|_{T^\theta}^2=\lambda^\theta_k$.

Finally, we show the strong convergence of $\{\tilde{u}_{i_j}\}_j$ in $V(T^\theta)$. From the strong convergence  (\ref{seq_convergence}) and the equality $\|\nabla u_0\|_{T^\theta}^2=\lambda^\theta_k$, the following convergence about the norm of the sub-sequence is available:
\begin{equation}
    \label{norm-convergence}
    \lim_{j\to\infty} \|
    \nabla\tilde{u}_{i_j}\|^2_{T^\theta}=
    \| \nabla u_0\|_{T^\theta}^2
    =\lambda^\theta_k~.
\end{equation}
The weak convergence (\ref{weak-convergence}) and the convergence of norm (\ref{norm-convergence}) enable the convergence of 
\eqref{eq:lem-congernce-eig-func}.
\end{proof}

\begin{theorem}\label{thm:derivative_formula}
    Let $I(\subset(0,\pi))$ be an open interval such that the $k$-th eigenvalue $\lambda_k^\theta$ is simple for any $\theta\in I$.
    Then we have
    \begin{equation}\label{eq:d-formula}
        \frac{d\lambda^\theta_k}{d\theta}=-2\cot\theta\cdot\|u^{\theta}_{y}\|_{T^\theta}^2+2({u^{\theta}_{x}},{u^{\theta}_{y}})_{T^\theta}~~\mbox{ for 
 any }\theta\in I,
    \end{equation}
    where $u^\theta$ denotes an eigenfunction associated with $\lambda^\theta_k$ such that $\|u^\theta\|_{T^\theta}=1$.
\end{theorem}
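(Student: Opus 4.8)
The plan is to pull the eigenvalue problem on the perturbed triangle $T^{\tilde\theta}$ back to the fixed reference triangle $T^\theta$ and to reduce the statement to a standard first-order perturbation formula for a family of generalized eigenvalue problems posed on the single space $V(T^\theta)$. Concretely, applying Lemma \ref{lemma2} with $Q=S_{\theta,\tilde\theta}$, whose entries are $\alpha=(\cos\tilde\theta-\cos\theta)/\sin\theta$ and $\beta=\sin\tilde\theta/\sin\theta$, every $w\in V(T^{\tilde\theta})$ corresponds to $u=w\circ\Phi_{\theta,\tilde\theta}\in V(T^\theta)$ with
\begin{equation*}
\|\nabla w\|_{T^{\tilde\theta}}^2=a_{\tilde\theta}(u,u),\qquad \|w\|_{T^{\tilde\theta}}^2=m_{\tilde\theta}(u,u):=\beta\,(u,u)_{T^\theta},
\end{equation*}
where part (c) gives the symmetric form
\begin{equation*}
a_{\tilde\theta}(u,u)=\left(\beta+\tfrac{\alpha^2}{\beta}\right)(u_x,u_x)_{T^\theta}-\tfrac{2\alpha}{\beta}(u_x,u_y)_{T^\theta}+\tfrac{1}{\beta}(u_y,u_y)_{T^\theta}.
\end{equation*}
Since the pullback is a bijection preserving the boundary conditions, $\lambda_k^{\tilde\theta}$ is exactly the $k$-th eigenvalue of the generalized problem $a_{\tilde\theta}(u,v)=\lambda\,m_{\tilde\theta}(u,v)$ on the fixed space $V(T^\theta)$, the coefficients depend real-analytically on $\tilde\theta$ near $\theta$, and at $\tilde\theta=\theta$ (where $\alpha=0,\beta=1$) they reduce to $a_\theta(u,v)=(\nabla u,\nabla v)_{T^\theta}$ and $m_\theta(u,v)=(u,v)_{T^\theta}$.

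Next I would establish that $\lambda_k^{\tilde\theta}$ is differentiable at a point $\theta\in I$ where the eigenvalue is simple, together with the perturbation identity
\begin{equation*}
\frac{d\lambda_k^{\tilde\theta}}{d\tilde\theta}\Big|_{\tilde\theta=\theta}=\dot a_\theta(u^\theta,u^\theta)-\lambda_k^\theta\,\dot m_\theta(u^\theta,u^\theta),
\end{equation*}
where $u^\theta$ is the $L^2(T^\theta)$-normalized eigenfunction and the dots denote differentiation of the coefficients in $\tilde\theta$ at $\tilde\theta=\theta$ with the argument held fixed. Formally this is the classical consequence of differentiating $a_{\tilde\theta}(u^{\tilde\theta},v)=\lambda_k^{\tilde\theta}m_{\tilde\theta}(u^{\tilde\theta},v)$, testing with $v=u^\theta$, and cancelling the terms containing $\dot u^{\tilde\theta}$ by symmetry and the eigenvalue equation. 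To make this rigorous without presupposing smoothness of the eigenfunction branch, I would run a difference-quotient argument along sequences $\theta_i\to\theta$ from each side, sandwiching $\lambda_k^{\theta_i}-\lambda_k^\theta$ via the variational (min--max) characterisation: one bound inserts the $\theta$-eigenfunctions as a fixed trial subspace evaluated under $(a_{\theta_i},m_{\theta_i})$, whose $\tilde\theta$-derivative is directly computable, while the reverse bound uses the actual pulled-back eigenfunctions $\tilde u_i$ under $(a_\theta,m_\theta)$. Because $\lambda_k^\theta$ is simple, Lemma \ref{convergence-existence} forces the whole sequence $\tilde u_i$ to converge strongly in $V(T^\theta)$ to $u^\theta$ (up to sign), so the quadratic quantities $(\tilde u_{i,x},\tilde u_{i,y})_{T^\theta}$ and $\|\tilde u_{i,y}\|_{T^\theta}^2$ converge to those of $u^\theta$; the two bounds then pinch to a single value.

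The formula itself follows from a short computation. Differentiating $\alpha(\tilde\theta)$ and $\beta(\tilde\theta)$ at $\tilde\theta=\theta$ gives $\dot\alpha=-1$ and $\dot\beta=\cot\theta$. Hence the coefficient of $(u_x,u_x)$ has derivative $\dot\beta=\cot\theta$, that of $(u_x,u_y)$ has derivative $-2\dot\alpha=2$, that of $(u_y,u_y)$ has derivative $-\dot\beta=-\cot\theta$, and $\dot m_\theta(u^\theta,u^\theta)=\dot\beta\,\|u^\theta\|^2=\cot\theta$. Substituting into the perturbation identity and using $\lambda_k^\theta=\|u^\theta_x\|_{T^\theta}^2+\|u^\theta_y\|_{T^\theta}^2$ with $\|u^\theta\|=1$, the $\|u^\theta_x\|^2$-terms cancel and the $(u_y,u_y)$-terms combine, leaving exactly $-2\cot\theta\,\|u^\theta_y\|_{T^\theta}^2+2(u^\theta_x,u^\theta_y)_{T^\theta}$.

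The main obstacle I anticipate is the second step: justifying that a simple $k$-th eigenvalue is genuinely differentiable and that the first-order formula holds, since no smoothness of the eigenfunction is assumed at the outset. The simplicity hypothesis is precisely what is needed here, as it guarantees that the limiting eigenfunction supplied by Lemma \ref{convergence-existence} is unique up to sign, so that the difference quotients converge rather than merely admitting convergent subsequences and the two-sided Rayleigh-quotient bounds can be pinched. Everything else is either the bookkeeping of Lemma \ref{lemma2} or the elementary differentiation of $\alpha(\tilde\theta)$ and $\beta(\tilde\theta)$.
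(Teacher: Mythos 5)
Your reduction to the family of generalized eigenproblems $a_{\tilde\theta}(u,v)=\lambda\, m_{\tilde\theta}(u,v)$ on the fixed space $V(T^\theta)$ via Lemma \ref{lemma2}, and the final algebra ($\dot\alpha=-1$, $\dot\beta=\cot\theta$, cancellation of the $\|u^\theta_x\|^2$ terms through $\lambda_k^\theta=\|u^\theta_x\|^2_{T^\theta}+\|u^\theta_y\|^2_{T^\theta}$), are correct and reproduce \eqref{eq:d-formula}. Where you diverge from the paper is in how the first-order identity is justified. The paper does not use a min--max sandwich: it tests the pulled-back $\tilde\theta$-equation (with matrix $P_{\theta,\tilde\theta}=S^{-1}_{\theta,\tilde\theta}S^{-\intercal}_{\theta,\tilde\theta}$) against $u^\theta$ and the $\theta$-equation against $\tilde u$, subtracts, and gets the \emph{exact} identity $(\lambda_k^{\tilde\theta}-\lambda_k^\theta)(\tilde u,u^\theta)_{T^\theta}=\bigl((P_{\theta,\tilde\theta}-I)\nabla\tilde u,\nabla u^\theta\bigr)_{T^\theta}$, so the difference quotient is a ratio of two quantities whose limits follow from Lemma \ref{convergence-existence} (strong convergence $\tilde u_i\to\pm u^\theta$, no rate required) and from simplicity (which keeps $(\tilde u_i,u^\theta)_{T^\theta}$ away from zero). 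This cross-testing trick needs only the single level-$k$ eigenfunction and works uniformly in $k$.

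The concrete gap in your version is the ``reverse'' bound of the sandwich. For $k=1$ it goes through, since $R_\theta(v)\ge\lambda_1^\theta$ for every trial function, so $\lambda_1^\theta\le R_\theta(\tilde u_i)=\lambda_1^{\theta_i}-(\theta_i-\theta)\bigl(G(\tilde u_i)+o(1)\bigr)$ with $G(v):=\dot a_\theta(v,v)-R_\theta(v)\dot m_\theta(v,v)$, and $G(\tilde u_i)\to G(u^\theta)$ by the compactness lemma. For $k\ge 2$, however, $R_\theta(\tilde u_i)$ for a single trial function is not comparable to $\lambda_k^\theta$; you must insert the entire pulled-back $k$-dimensional trial subspace into the min--max, and you then need convergence of that subspace to the span of the first $k$ eigenfunctions on $T^\theta$ --- including levels $j<k$ where eigenvalues may be multiple --- which Lemma \ref{convergence-existence} (one eigenfunction, along a subsequence) does not supply. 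The alternative of invoking stationarity of $R_\theta$ at $u^\theta$ to write $R_\theta(\tilde u_i)=\lambda_k^\theta+O(\|\tilde u_i-u^\theta\|_{V(T^\theta)}^2)$ requires a rate $o(|\theta_i-\theta|^{1/2})$ that the compactness argument does not provide. Either repair the reverse bound along these lines, or adopt the paper's exact identity, which makes the issue disappear.
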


\begin{proof}

Let $\theta,\tilde\theta\in I$ with $\theta\neq\tilde\theta$.
Let 
$(\lambda_k^{\theta},u^{\theta})$ and $(\lambda_k^{\tilde\theta},u^{\tilde\theta})$ be two eigenpairs corresponding to ${\theta}$ and ${\tilde{\theta}}$, respectively.
For $(\lambda_k^{\tilde\theta},u^{\tilde\theta})$, the following variational equation holds:
\begin{equation*}
(\nabla u^{\tilde\theta},\nabla\tilde v)_{T^{\tilde\theta}}=\lambda^{\tilde\theta}_k(u^{\tilde\theta},\tilde v)_{T^{\tilde\theta}} \hspace{1em} \forall \tilde v\in V(T^{\tilde\theta}),
\end{equation*}
Introduce $\tilde{u}:=u^{\tilde\theta}\circ\Phi_{\theta,\tilde\theta} \in V(T^\theta)$. Note that
$\widetilde{\nabla} u^{\tilde\theta} = (S^{-\intercal}_{\theta,\tilde\theta})\nabla \tilde{u}$, we have
\begin{equation*}
    \left(
    (S^{-\intercal}_{\theta,\tilde\theta})\nabla \tilde{u}
    ,
    (S^{-\intercal}_{\theta,\tilde\theta})\nabla v
    \right)_{T^\theta}
    =
    \lambda^{\tilde\theta}_k
    (\tilde{u},v)_{T^\theta}
    \hspace{0.5em}\forall v\in V(T^\theta)~.
\end{equation*}
Letting $P_{\theta,\tilde\theta}:=S^{-1}_{\theta,\tilde\theta}S^{-\intercal}_{\theta,\tilde\theta}$ and 
substituting $v=u^\theta$, 
\begin{equation}
\label{perturbated-formula}
    \left(
    P_{\theta,\tilde\theta}\nabla \tilde{u}
    ,
\nabla u^\theta
    \right)_{T^\theta}
    =
    \lambda^{\tilde\theta}_k
    (\tilde{u},u^\theta)_{T^\theta}.
\end{equation}
Also, for $(\lambda_k^{\theta}, u^\theta)$, it holds that
\begin{equation*}
(\nabla u^{\theta},\nabla v)_{T^{\theta}}=\lambda^\theta_k(u^\theta,v)_{T^{\theta}} \hspace{1em} \forall v\in V(T^\theta).
\end{equation*}
Take $v=\tilde{u}$ in the above variation equation, then we have
\begin{equation}\label{base-formula}
(\nabla u^{\theta},\nabla \tilde{u})_{T^{\theta}}=\lambda^\theta_k(u^\theta,\tilde{u})_{T^{\theta}}.
\end{equation}
From (\ref{perturbated-formula}) and (\ref{base-formula}), we have
\begin{align*}
    \frac{\lambda_k^{\tilde{\theta}}-\lambda_k^\theta}{\tilde{\theta}-\theta}
    &=
    \frac{\left(
    \frac{1}{\tilde\theta-\theta}(P_{\theta,\tilde\theta}-I)\nabla \tilde{u}
    ,
    \nabla u^{\theta}
    \right)_{T^\theta}}
    {(\tilde{u}, u^{\theta})_{T^\theta}}=:\zeta(\tilde{\theta})~.
\end{align*}

Next, for a fixed $\theta$ along with an eigenpair $(\lambda_k^\theta, u^\theta)$, let us show the existence of the limit of the above equation by confirming the convergence of $\zeta({\theta_i})$ for arbitrary sequence $\{\theta_i\} \subset I$ that converges to $\theta$.
%
%
%
%
Below, we show two properties of the sequence  $\{\zeta(\theta_i)\}$ .
\paragraph{(a) Boundedness of  $\{\zeta(\theta_i)\}$} 
Let ${u}_i:=u^{\theta_i}\circ\Phi_{\theta,\theta_{i}}\in V(T^\theta)$. First, we show that 
$\lim_{i\to\infty}|( u_i,u^\theta)_{T^\theta}|>0$ by contradiction.
From Lemma \ref{convergence-existence}, there exists a sub-sequence $\{ u_{i_j}\}_{j=1}^\infty$ of $\{ u_i\}_{i=1}^\infty$ and an eigenfunction $u_0\in V(T^\theta)$ corresponding to $\lambda_k^\theta$ such that
$$  \|\nabla u_0\|_{T^\theta}^2=\lambda^\theta_k,~\|u_0\|_{T^\theta}=1, \quad    \lim_{j\to\infty}\|\nabla u_{i_j}-\nabla u_0\|_{T^{\theta}}=0~.
$$
Due to the simplicity of $\lambda_k^\theta$, we have $u_0=\pm u^\theta$. 
If
$\lim_{i\to\infty}|( u_i,u^\theta)_{T^\theta}|=0$, then
$
    |(u_0,u^\theta)_{T^\theta}|=\lim_{j\to\infty}|(u_{i_j},u^\theta)_{T^\theta}|=0,
$
which implies $u_0$ is orthogonal to $u^\theta$ and contradicts the simplicity of $\lambda_k^\theta$. 
Also, note that \begin{equation}\label{eq:convergence-of-P}
    \lim_{i\to\infty}
    \frac{P_{\theta,\theta_{i}}-I}{\theta_{i}-\theta}
    =
    \lim_{i\to\infty}
    \frac{1}{\theta_i-\theta}
    \begin{pmatrix}
        \frac{(\cos\theta_i-\cos\theta)^2}{\sin^2\theta_i} & \frac{\sin\theta(\cos\theta-\cos\theta_i)}{\sin^2\theta_i}\\
        \frac{\sin\theta(\cos\theta-\cos\theta_i)}{\sin^2\theta_i} & \frac{\sin\theta^2-\sin\theta^2_i}{\sin^2\theta_i}
    \end{pmatrix}\\
    =
    \begin{pmatrix}
    0 & 1\\
    1 & -2\cot\theta
    \end{pmatrix}.
\end{equation}
By further applying the Schwarz inequalities, it is confirmed that $\{\zeta(\theta_i)\}$ is bounded. 

\paragraph{(b) Convergence of sequence $\{\zeta(\theta_i)\}$ }

Let $\{\hat\theta_i\}$ be a sub-sequence of $\{\theta_i\}$ such that  $\lim_{i\to\infty}\zeta(\hat\theta_i)$ exists.
From Lemma \ref{convergence-existence}, there exist a sub-sequence $\{\hat\theta_{i_j}\}_{j=1}^\infty$ of $\{\hat\theta_i\}_{i=1}^\infty$ and an eigenfunction $u_0(=\pm u^\theta)$ corresponding to $\lambda_k^\theta$ such that
\begin{gather*}
    \lim_{j\to\infty}\|\nabla (u^{\hat\theta_{i_j}}\circ\Phi_{\theta,\hat\theta_{i_j}})-\nabla u_0\|_{T^{\theta}}=0,~~
    \|u_0\|_{T^\theta}=1.
\end{gather*}
Thus, it holds that
\begin{equation*}
(    \lim_{i\to\infty}\zeta(\hat\theta_{i})
    =
)    \lim_{j\to\infty}\zeta(\hat\theta_{i_j})
    =
    -2\cot\theta\cdot
    \|u^{\theta}_{y}\|_{T^\theta}^2
    +
    2(u^{\theta}_{x},u^{\theta}_y)_{T^\theta}~.
\end{equation*}
Since $\lambda_k^\theta$ is simple, the above limit is uniquely determined and is independent of the choice of a sub-sequence $\{\hat\theta_{i}\}$.
As any convergent sub-sequence of $\{\zeta(\hat\theta_{i})\}$ has the same limit, it follows that
\begin{equation*}
    \lim_{i\to\infty}\zeta(\theta_{i})
    =
    -2\cot\theta\cdot
    \|u^{\theta}_{y}\|_{T^\theta}^2
    +
    2(u^{\theta}_{x},u^{\theta}_{y})_{T^\theta}.
\end{equation*}

From the above two properties, one can draw the conclusion of \eqref{eq:d-formula}.
\end{proof}


\medskip

\begin{remark}
Hadamard considered the derivative of eigenvalues with respect to domain shape variation in the early days \cite{hadamard1908memoire}. In classical literature, most discussion focuses on homogeneous Dirichlet eigenvalue problem on domains with smooth boundaries (see \cite{hadamard1908memoire, rousselet1983shape}). 
Hadamard's method can be in principle applied to the non-homogeneous Neumann eigenvalue problem over polygonal domains, while a detailed argument about the boundary condition is needed. Here, Theorem \ref{thm:derivative_formula} provides an elementary and concise proof for the derivative of eigenvalues.

\end{remark}

For $u\in V(T^\theta)$, let us introduce a functional $F$ to simplify the notation: 
\begin{equation}
    F(u):=-2\cot\theta\cdot\frac{\|u_{y}\|_{T^\theta}^2}{\|u\|_{T^\theta}^2}+2\frac{(u_{x},u_{y})_{T^\theta}}{\|u\|_{T^\theta}^2}.
\end{equation}
For $\theta\in(0,\pi)$, and $a,b,c\in\mathbb{R}$ with $a\leq b< c$, introduce quantity $\mbox{Err}(a,b,c,\theta)$ such that
\begin{equation}
    \mbox{Err}(a,b,c,\theta):=2\sqrt{b}\left(\sqrt{2}\cot\theta+2\right)\eta(a,b,c)~.
\end{equation}

The following theorem discusses the estimation of $F(u)$ using FEM approximations.

\begin{theorem}\label{thm:d-estimation}
Let $I(\subset(0,\pi))$ be an open interval such that the first eigenvalue $\lambda_1^\theta$ is simple for any $\theta\in I$. Let $u^\theta$ be an 
$L^2$-normalized eigenfunction corresponding to $\lambda_1^\theta$. Let $u^{\theta}_h\in V_h^{CG}(T^\theta)$ be an 
$L^2$-normalized approximate eigenfunciton of $u^\theta$. Take $\lambda_{1,h}^{\theta}:=\|\nabla u^{\theta}_h\|^2_{T^\theta}$.
Let $\rho^\theta\in\mathbb{R}$ be a quantity such that $(\lambda_1^\theta\leq)\lambda_{1,h}^{\theta}<\rho^\theta\leq\lambda_2^\theta$.
Then we have
\begin{equation*}
    \left|\frac{d\lambda_1^\theta}{d\theta}-F(u_h^{\theta})\right|\\
    \leq
    \mbox{Err}(\lambda_1^\theta,\lambda_{1,h}^{\theta},\rho^\theta,\theta       )~.
\end{equation*}
\end{theorem}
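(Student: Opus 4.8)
The plan is to treat the statement purely as an error-propagation estimate. By Theorem~\ref{thm:derivative_formula} together with $\|u^\theta\|_{T^\theta}=1$, the exact derivative is $\frac{d\lambda_1^\theta}{d\theta}=-2\cot\theta\,\|u^\theta_y\|_{T^\theta}^2+2(u^\theta_x,u^\theta_y)_{T^\theta}=F(u^\theta)$, and likewise $F(u_h^\theta)=-2\cot\theta\,\|\partial_y u_h^\theta\|_{T^\theta}^2+2(\partial_x u_h^\theta,\partial_y u_h^\theta)_{T^\theta}$ because $\|u_h^\theta\|_{T^\theta}=1$. Hence the whole theorem reduces to a Lipschitz-type bound $|F(u^\theta)-F(u_h^\theta)|\le\mbox{Err}(\lambda_1^\theta,\lambda_{1,h}^\theta,\rho^\theta,\theta)$ for the functional $F$ along the difference $e:=u^\theta-u_h^\theta$.

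First I would remove the sign ambiguity of the exact eigenfunction. Since $F$ is even (a quotient of quadratic forms), both $F(u^\theta)$ and $\frac{d\lambda_1^\theta}{d\theta}$ are unchanged if $u^\theta$ is replaced by $-u^\theta$; so I may fix the sign so that, with $E=\mbox{span}\{u^\theta\}$ and $\widehat E=\mbox{span}\{u_h^\theta\}$, one has $\|\nabla e\|_{T^\theta}=\overline\delta(E,\widehat E)$. Lemma~\ref{lem:original_eigenvec_estimation} in the form \eqref{eq:eig-vec-est-simp} then yields the key a priori bound $\|\nabla e\|_{T^\theta}\le\eta(\lambda_1^\theta,\lambda_{1,h}^\theta,\rho^\theta)=:\eta$, which is licensed precisely by the hypotheses $\|u^\theta\|=\|u_h^\theta\|=1$ and $\lambda_{1,h}^\theta<\rho^\theta\le\lambda_2^\theta$. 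I would also record the coarse gradient bounds $\|\nabla u^\theta\|_{T^\theta}^2=\lambda_1^\theta\le\lambda_{1,h}^\theta$ and $\|\nabla u_h^\theta\|_{T^\theta}^2=\lambda_{1,h}^\theta$, so that each factor not carrying $e$ is controlled by $\sqrt{\lambda_{1,h}^\theta}=\sqrt b$, and in particular $\|\nabla(u^\theta+u_h^\theta)\|_{T^\theta}\le 2\sqrt b$.

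Next I would split $F(u^\theta)-F(u_h^\theta)$ into the $\cot\theta$-weighted term $-2\cot\theta(\|u^\theta_y\|^2-\|\partial_y u_h^\theta\|^2)$ and the mixed term $2((u^\theta_x,u^\theta_y)-(\partial_x u_h^\theta,\partial_y u_h^\theta))$. Factoring each as a difference — $\|u^\theta_y\|^2-\|\partial_y u_h^\theta\|^2=(e_y,\,u^\theta_y+\partial_y u_h^\theta)$ and the analogous bilinear rewriting of the mixed term — and then applying the Cauchy–Schwarz inequality, every summand becomes a product of a factor bounded by $\|\nabla e\|\le\eta$ and a factor bounded by a constant times $\sqrt b$. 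Conceptually this is cleanest through the symmetric weight $M=\begin{pmatrix}0&1\\1&-2\cot\theta\end{pmatrix}$ of \eqref{eq:convergence-of-P}: the numerator of $F$ is $\int_{T^\theta}(\nabla u)^\intercal M\nabla u$, and the identity $a^\intercal Ma-b^\intercal Mb=(a-b)^\intercal M(a+b)$ gives $F(u^\theta)-F(u_h^\theta)=\int_{T^\theta}(\nabla e)^\intercal M\,\nabla(u^\theta+u_h^\theta)$, after which Cauchy–Schwarz applied to the off-diagonal part and to the $\cot\theta$-weighted part yields a bound of the shape $2\sqrt b\,\eta$ times a factor linear in $\cot\theta$. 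The remaining task is to arrange the estimates so that this factor is exactly $\sqrt2\cot\theta+2$, matching $\mbox{Err}$.

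I expect the main obstacle to be this sharp constant tracking, i.e.\ pinning the factor $\sqrt2\cot\theta+2$ rather than a cruder linear-in-$\cot\theta$ bound. The difficulty is the anisotropy of $M$: the weight $\cot\theta$ multiplies only the $y$-components, so a blunt term-by-term triangle inequality (bounding $\|u^\theta_y+\partial_y u_h^\theta\|$ by $2\sqrt b$) over-counts and produces a factor of the form $2\cot\theta+2$. Reducing the $\cot\theta$-coefficient to the sharp $\sqrt2$ requires a more careful, $y$-direction-specific estimate of $\|u^\theta_y+\partial_y u_h^\theta\|$ against $\sqrt b$ combined with the constraint $\|e_x\|^2+\|e_y\|^2\le\eta^2$, so that the $\cot\theta$-weighted piece and the mixed piece contribute precisely the two summands of $\mbox{Err}(\lambda_1^\theta,\lambda_{1,h}^\theta,\rho^\theta,\theta)$. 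The analytic inputs — the derivative formula and the eigenvector error estimate — are already in hand; what remains is this careful organization of the inequalities.
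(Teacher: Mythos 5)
Your plan is essentially the paper's proof: reduce the claim to $|F(u^\theta)-F(u_h^\theta)|$, fix the sign of $u_h^\theta$ so that Lemma~\ref{lem:original_eigenvec_estimation} gives $\|\nabla u^\theta-\nabla u^\theta_h\|_{T^\theta}\le\eta(\lambda_1^\theta,\lambda_{1,h}^\theta,\rho^\theta)=:\eta$, then polarize the two quadratic terms and apply Cauchy--Schwarz with the gradient bounds $\|\nabla u^\theta\|^2=\lambda_1^\theta\le\lambda_{1,h}^\theta$ and $\|\nabla u_h^\theta\|^2=\lambda_{1,h}^\theta$. The genuine gap is the step you explicitly leave open: you never derive the factor $\sqrt2\cot\theta+2$, and a proof that ends with ``the remaining task is to arrange the estimates'' is not a proof. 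The paper closes this step by the direction-specific Cauchy--Schwarz you anticipated: for the mixed term,
\[
\bigl|(u^\theta_x,u^\theta_y)_{T^\theta}-(u^{\theta}_{h,x},u^{\theta}_{h,y})_{T^\theta}\bigr|
\le \|u^\theta_x-u^{\theta}_{h,x}\|\,\|u^\theta_y\|+\|u^{\theta}_{h,x}\|\,\|u^\theta_y-u^{\theta}_{h,y}\|
\le \sqrt{\lambda_1^\theta+\lambda_{1,h}^\theta}\;\eta\le\sqrt{2\lambda_{1,h}^\theta}\,\eta,
\]
where the middle inequality pairs $(\|u^\theta_x-u^{\theta}_{h,x}\|,\|u^\theta_y-u^{\theta}_{h,y}\|)$ against $(\|u^\theta_y\|,\|u^{\theta}_{h,x}\|)$; for the weighted term, $\bigl|\|u^\theta_y\|^2-\|u^{\theta}_{h,y}\|^2\bigr|\le(\sqrt{\lambda_1^\theta}+\sqrt{\lambda_{1,h}^\theta})\,\eta\le 2\sqrt{\lambda_{1,h}^\theta}\,\eta$. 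So the $\sqrt2$ comes from the \emph{mixed} term (via $\sqrt{\lambda_1^\theta+\lambda_{1,h}^\theta}\le\sqrt2\sqrt{\lambda_{1,h}^\theta}$), not from a refined treatment of the $y$-components as you conjectured.

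Your suspicion about the constant is nevertheless well founded. Summing the two contributions gives $2\cot\theta\cdot 2\sqrt{\lambda_{1,h}^\theta}\,\eta+2\cdot\sqrt2\sqrt{\lambda_{1,h}^\theta}\,\eta=2\sqrt{\lambda_{1,h}^\theta}\,(2\cot\theta+\sqrt2)\,\eta$, i.e.\ the coefficient $2$ attaches to $\cot\theta$ and the $\sqrt2$ to the constant --- the transpose of the stated $\mbox{Err}=2\sqrt{b}\,(\sqrt2\cot\theta+2)\,\eta$. The stated bound dominates the derived one precisely when $\cot\theta\le1$, i.e.\ $\theta\ge\pi/4$, which holds on the intervals near $\pi/3$ where the theorem is actually applied but not on all of $(0,\pi)$. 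If you finish your argument with the pairing above, the honest output is $2\sqrt{\lambda_{1,h}^\theta}\,(2\cot\theta+\sqrt2)\,\eta$; matching the theorem as literally stated requires the additional (unstated) restriction $\theta\ge\pi/4$ or a correction of the coefficients in the definition of $\mbox{Err}$.
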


\begin{proof}
Note that the value of $F(u)$ does not change if the sign of $u$ is changed.
From Lemma \ref{lem:original_eigenvec_estimation}, with a proper selection of the sign of $u_h^\theta$, we have
\begin{equation*}
    \|\nabla u^\theta-\nabla u^{\theta}_h\|_{T^\theta}\leq\eta(\lambda_1^\theta,\lambda_{1,h}^{\theta},\rho^\theta).
\end{equation*}
Recall the derivative of $\lambda_1^\theta$ obtained in Theorem \ref{thm:derivative_formula} such that
\begin{equation*}
    \frac{d\lambda^\theta_1}{d\theta}=F(u^\theta)=-2\cot\theta\cdot\|u^{\theta}_{y}\|_{T^\theta}^2+2({u_{x}^{\theta}},{u_{y}^{\theta}})_{T^\theta}~.
\end{equation*}
Let us compare the difference of the  terms of $F(u^\theta)$ and $F(u_h^\theta)$:
\begin{align*}
   & \left|(u^\theta_x,u^\theta_y)_{T^\theta}-(u^{\theta}_{h,x},u^{\theta}_{h,y})_{T^\theta}\right|\\
     \leq &
    |(u^\theta_x,u^\theta_y)_{T^\theta}-(u^{\theta}_{h,x},u^\theta_y)_{T^\theta}|
    +
    |(u^{\theta}_{h,x},u^\theta_y)_{T^\theta}-(u^{\theta}_{h,x},u^{\theta}_{h,y})_{T^\theta}|\\
    \leq &
    \left\|u^\theta_x-u^{\theta}_{h,x}\right\|_{T^\theta}\left\|u^\theta_y\right\|_{T^\theta}+\left\|u^{\theta}_{h,x}\right\|_{T^\theta}\left\|u^\theta_y-u^{\theta}_{h,y}\right\|_{T^\theta}\\
    \leq &
    \sqrt{\lambda_{1}^\theta+\lambda_{1,h}^\theta}\|\nabla u^\theta-\nabla u^{\theta}_h\|_{T^\theta}
    \end{align*}
and
    \begin{align*}
&\left|\|u_y^{\theta}\|_{T^\theta}^2-\|u^{\theta}_{h,y}\|_{T^\theta}^2\right| \\
    \leq &   \left\|u_y^{\theta}+u^{\theta}_{h,y}\right\|_{T^\theta}\cdot
    \left\|u_y^{\theta}-u^{\theta}_{h,y}\right\|_{T^\theta}\\
    \leq&\left(\|u_y^{\theta}\|_{T^\theta}+\|u^{\theta}_{h,y}\|_{T^\theta}\right)
    \left\|\nabla u^{\theta}-\nabla u^{\theta}_h\right\|_{T^\theta}\\
    \leq &
    (\sqrt{\lambda_{1}^\theta}+\sqrt{\lambda_{1,h}^\theta})\|\nabla u^\theta-\nabla u^{\theta}_h\|_{T^\theta}.
\end{align*}
Using the fact $\lambda_1^\theta\leq\lambda_{1,h}^\theta$, we have
\begin{align*}
&\left|\frac{d\lambda_1^\theta}{d\theta}-F(u^{\theta}_h)\right|\\
    &\leq \frac{2}{\tan\theta}\left|\|u^{\theta}_y\|_{T^\theta}^2-\|u^{\theta}_{h,y}\|_{T^\theta}^2\right|
    +2\left|(u^\theta_x,u^\theta_y)_{T^\theta}-(u^{\theta}_{h,x},u^{\theta}_{h,y})_{T^\theta}\right|\\
    & \leq
    2\sqrt{\lambda_{1,h}^\theta}\left(\sqrt{2}\cot\theta+2\right)\eta(\lambda_1^\theta,\lambda_{1,h}^{\theta},\rho^\theta)
    (=\mbox{Err}(\lambda_1^\theta,\lambda_{1,h}^{\theta},\rho^\theta,\theta)).
\end{align*}
\end{proof}

To obtain the range of $F(u^\theta)(=\frac{d\lambda_1^\theta}{d\theta})$
over an interval $I=[\theta_1,\theta_2]$,  the following estimation is required.
\begin{equation}
\label{eq:range_of_f_over_interval}
\sup_{{\theta}\in [\theta_1, \theta_2] }
\left|\frac{d\lambda_1^\theta}{d\theta}-F(u_h^{\theta})\right|\\
    \leq
\sup_{{\theta}\in [\theta_1, \theta_2] }
\mbox{Err}(\lambda_1^\theta,\lambda_{1,h}^{\theta},\rho^\theta,\theta)~.
\end{equation}
In evaluating $\mbox{Err}(\lambda_1^\theta,\lambda_{1,h}^{\theta},\rho^\theta,\theta)$, we take $(\lambda_{1,h}^{\theta_1}, u_{h}^{\theta_1})$ as the reference eigenpair and 
${u}^\theta_h:=u_h^{\theta_1} \circ\Phi_{\theta,\theta_1}\in V(T^\theta)$ 
as the approximation to $u^\theta$. Since there are infinite $\theta$'s in interval $I$, the value of $\lambda_1^\theta$ and $\lambda_{1,h}^\theta=R({u}^\theta_h)$ is not evaluated directly, but estimated using the perturbation estimation of Lemma \ref{perturbation}.

\section{Solution to the optimization problems for Laplacian eigenvalues upon triangle shapes}\label{section:numerical-results}

For a triangular domain $T$, $\mathcal{T}^h$ denotes a regular subdivision of $T$ with triangular domains; that is, any two edges $e_i$ and $e_j$ of elements of $\mathcal{T}^h$ satisfy $e_i\cap e_j=e_i=e_j$ or $\mu(e_i\cap e_j)=0$, where $\mu(\cdot)$ is the $1$-dimensional measure.

Let us introduce the finite element spaces $V_h^{\CG}$ and $V_h^{\CR}$ over $\mathcal{T}^h$.
\begin{itemize}
    \item The Lagrange FEM space $V_h^\CG$:
    \begin{equation}
          \label{def:fem-space-cg}
V_h^{\CG}:=
\{v_h~|~v_h\mbox{ is a continuous piecewise linear polynomial on } \mathcal{T}^h.
\}
    \end{equation}
    \item The Crouzeix--Raviart FEM space $V_h^\CR$:
\begin{gather}
\label{def:fem-sapce-cr}
V_h^{\CR}:=\{v_h~|~v_h\mbox{ is a piecewise linear polynomial on } \mathcal{T}^h; \quad\quad \quad\quad \quad\\
\quad \quad \quad v_h\mbox{ is continuous on the midpoint of each inter-element edge } e. \}. \quad\quad \notag
\end{gather}
\end{itemize}
In the numerical results reported in the following two subsections, the FEM spaces are set up over a  uniform triangulation of the triangle domain. Denote by $N$ the subdivision number of the triangulation along the base edge. Below is the detailed setting for the FEM spaces:
$$
V_h^{\CG}: {N}=96,~
\mbox{DOF}  = 4753;\quad
V_h^{\CR}: N= 64,~ \mbox{DOF}= 6240~.
$$

Following the proof outline described at the beginning of Section \ref{section:main-theories}, rigorous calculations are conducted to solve the shape optimization problems for the Dirichlet eigenvalue and the Crouzeix--Raviart interpolation error constant. Below, we describe the algorithms for the calculations needed in Step 2 and Step 3. The concrete parameter settings for the algorithms are stated in each problem at  \S\ref{subsec:dirichelt} and \S\ref{subsec:CR-constant}.

\vspace{1em}

To show $\lambda_1^\theta>\lambda_1^\frac{\pi}{3}$ over $I:=(0,\frac{\pi}{3}-\varepsilon]$, a lower bound of $\lambda_1^\theta$ for all $\theta\in I$ is estimated by utilizing Algorithm \ref{algorithm-1-new}, where the interval $I$ is divided into small sub-intervals $I_i:=(\theta_{i-1},\theta_i]~(i=1,\cdots,N_1)$. 
    \begin{algorithm}[H]
    \caption{\label{algorithm-1-new} Lower bound of $\lambda_1^\theta$ over interval $I$}
     \KwData{Interval $I =\cup I_i$, $I_i=(\theta_{i-1},\theta_{i}]$ ($i=1,\cdots,N_1$)}
     \KwResult{$\underline{\lambda_1^I}$ as a lower bound of ${\lambda^\theta_1}$ over $I$} 
     {\bf Procedure:} 
     For each $I_i$, $i=1, \cdots , N_1$,
     \begin{enumerate}
         \item [1.] Evaluate $\lambda_1^{\theta_i}$ for triangular domain $T^{\theta_i}$ by Lemma \ref{lem:l-estimation}.
         \vspace{0.2em}
        \item [2.] Evaluate $\underline{\lambda_1^{I_i}}$  as a lower bound of ${\lambda^\theta_1}$ over $I_i$ by letting {$\theta=\theta_{i}$ and $\tilde\theta=  \theta_{i-1}$}
 in Lemma \ref{perturbation}
     \end{enumerate}
    The output $\underline{\lambda_1^I}$ is calculated by the minimum value of all $\underline{\lambda_1^{I_i}}$.
    \end{algorithm}

The monotonicity of $\lambda_1^\theta$ w.r.t. $\theta$ over $J:=[\frac{\pi}{3}-\varepsilon,\frac{\pi}{3}]$ is validated through Algorithm \ref{algorithm-2-new}, where the range of $F(u^\theta)$ is rigorously estimated over an equal subdivision of $J$: $J=\cup J_i$,  $J_i:=[\theta_{i},\theta_{i}+h_\theta$, $\theta_i=\pi/3-\varepsilon+(i-1)h_\theta~$, $h_\theta=\varepsilon/N_2$ $(i=1,\cdots,N_2)$. 

\begin{algorithm}[H]
\caption{\label{algorithm-2-new} Range of $F(u^\theta)$ over interval $J$}
 \KwData{Interval $J =\cup J_i$, $J_i=[\theta_{i},\theta_{i+1}]$ ($i=1,\cdots,N_2$)}
 \KwResult{[$\underline{F}, \overline{F}$] as the estimation of range of $F(u^\theta)$ over $J$} 
 {\bf Procedure:} 
 For each $J_i$, $i=1, \cdots , N_2$,
 \begin{enumerate}
     \item [1.] Evaluate $\lambda_1^{\theta_i}$, $\lambda_2^{\theta_i}$ for triangular domain $T^{\theta_i}$ by Lemma \ref{lem:l-estimation}.
    \item [2.] Evaluate the range of $F(u^{\theta})$ over $J_i$ by Theorem \ref{thm:d-estimation} and the estimation \eqref{eq:range_of_f_over_interval}.
 \end{enumerate}
The output $[\underline{F}, \overline{F}]$ is calculated by the span of the range of $F(u^\theta)$ over each $J_i$.
\end{algorithm}

\begin{remark}
    Before applying Theorem \ref{thm:d-estimation} in Procedure 2, the simplicity of $\lambda_1^\theta$ over $J$ needs to be checked. While the simplicity of $\lambda_1^\theta$ is a known fact in the case of the Dirichlet boundary condition $V(T)=V_0(T)$ (see \cite{evans2010partial}), it is not obvious in the case of $V(T)=V_e(T)$. To guarantee the simplicity of $\lambda_1^\theta$ over $J$ in the case of $V(T)=V_e(T)$, the range of $\lambda_1^\theta$ and $\lambda_2^\theta$ are rigorously estimated utilizing Theorem \ref{lem:l-estimation} and \ref{perturbation}. Indeed, we have $\lambda_1^\theta\leq 28.069< 45.011\leq\lambda_2^\theta$ for $\theta\in J$. 
\end{remark}

\begin{remark}
    The evaluation of  $\eta(\lambda_1^\theta,\lambda_{1,h}^{\theta},\rho^\theta)$ over an interval $J_i$ (to be used in the estimation \eqref{eq:range_of_f_over_interval}) can be simplified by utilizing the monotonicity of $\eta$ w.r.t. its parameters. 
        In case $V(T)=V_0(T)$,  
 the quantities $\lambda_1^\theta$,  $\lambda_{1,h}^{\theta}$ and $\rho^\theta$ have the range as $\rho^\theta\in[121,123]$, $\lambda_1^\theta,\lambda_{1,h}^{\theta}\in[52,55]$ over the interval $(\pi/3-\varepsilon, \pi/3]$. In case $V(T)=V_e(T)$, $\rho^\theta\in[43,47]$, $\lambda_1^\theta,\lambda_{1,h}^{\theta}\in[27,29]$. In either case, the real-valued function $\eta(\lambda_1^\theta,\lambda_{1,h}^{\theta},\rho^\theta)$ defined in (\ref{eq:def-eta})
    is monotonically decreasing w.r.t. $\lambda_1^\theta$ and $\rho^\theta$, and increasing w.r.t. $\lambda_{1,h}^{\theta}$.  Therefore, for each sub-interval $J_i$,  a uniform lower bound of $\lambda_1^\theta$ and $\rho^\theta$,  a uniform upper bound of $\lambda_{1,h}^{\theta}$  are sufficient to provide an upper bound of $\eta(\lambda_1^\theta,\lambda_{1,h}^{\theta},\rho^\theta)$ for all $\theta \in J_i$.
\end{remark}
\begin{remark}
    From the inequality (\ref{eq:eig-vec-est}), sharper upper and lower bounds estimations of $\lambda_1$ are needed to decrease the value of $\max_{\theta\in J_i}\eta(\underline{\lambda_1^\theta},\lambda_{1,h}^\theta,\rho^\theta)$ for each $i$. To obtain sufficiently sharp upper and lower bounds, the interval $J$ is divided into $N_2$ subintervals.
\end{remark}

\subsection{Dirichlet eigenvalue problem}
\label{subsec:dirichelt}
Let us consider the case of $V(T)=V_0(T)$ in this section.
Finite element spaces $V_{h,0}^{\CG}$ and $V_{h,0}^{\CR}$ are defined by
\begin{gather*}
V_{h,0}^{\CG} := H^1_0(T) \cap V_{h}^{\CG},\\
V_{h,0}^{\CR}:=\{v_h \in V_h^\CR~: \int_{e} v_h =0 ~ \mbox{ for each boundary edge }e \mbox{ of } \mathcal{T}^h\}~.
\end{gather*}
These spaces are utilized as
$V_{h}^{\CG}$ and $V_{h}^{\CR}$ in Lemma \ref{lem:l-estimation} to estimate bounds of Dirichlet eigenvalues $\lambda_k^\theta$.
The intervals $I=(0,\pi/3-\varepsilon]$, $J=[\pi/3-\varepsilon,\pi/3]$ and their subdivision are selected as follows:
 $$
 \varepsilon=\pi/1500,~ N_1=170, ~ N_2=200~.
 $$
The subdivision nodes of 
 $I_i=(\theta_{i-1},\theta_i]~(i=1,\cdots,N_1)$ are defined by
        \begin{equation*}
           \theta_i=\frac{\pi}{3}*
           \begin{cases}
           0.02*i &\ \mbox{ for }i=0,..,45\\
           0.9+10^{-2}(i-45) &\ \mbox{ for }i=46,\cdots,50\\
           0.95+10^{-3}(i-50) &\ \mbox{ for }i=51,\cdots,90\\
           0.99+10^{-4}(i-90) &\ \mbox{ for }i=91,\cdots,170
           \end{cases}.
        \end{equation*}
    Utilizing Algorithm \ref{algorithm-1-new}, it is confirmed that $\max_{\theta \in I} \lambda_1^\theta>\lambda_1^{\frac{\pi}{3}}$ from the rigorous computation result:
    $$
    52.654\leq\lambda_1^\theta \mbox{ for } \theta\in I, \mbox{ while } \lambda^{\frac{\pi}{3}}_1\leq 52.641.
    $$ 
   
The range of $F(u^\theta)$ for $\theta \in J=[\frac{\pi}{3}-\varepsilon,\pi/3]$ is evaluated over an equal subdivision of $J$ with subdivision number as $N_2=200$.
The obtained range is displayed in  Table \ref{table:problem-1-quantities}. Meanwhile, to confirm 
the affection of terms involved in evaluating the range of $F$, the values of several quantities at $\theta=\pi/3$ are also provided in Table \ref{table:problem-1-quantities}.

\begin{table}[H]
  \caption{\label{table:problem-1-quantities}  The obtained range of $F(u^\theta)$ and related quantities}
  \centering
  \begin{tabular}{cl}
    \hline
    \rule[-2mm]{0mm}{6mm}{}
    $[\underline{F}, \overline{F}]$  & $[-42.461,-18.610]$\\
        \hline
    \rule[-2mm]{0mm}{6mm}{}
        $F(u_h^{\frac{\pi}{3}})$ & $\approx -30.401$ \\
            \rule[-2mm]{0mm}{6mm}{}
    $\mbox{Err}(\lambda_1^\frac{\pi}{3},\lambda_{1,h}^\frac{\pi}{3},\rho^\frac{\pi}{3},\frac{\pi}{3})$  & $\leq 11.791$\\
                \rule[-2mm]{0mm}{6mm}{}
    $\eta(\lambda_1^{\frac{\pi}{3}},\lambda_{1,h}^{\frac{\pi}{3}},\rho^{\frac{\pi}{3}})$ & $\leq 0.3163$\\
            \rule[-2mm]{0mm}{6mm}{}
            $\|u^\frac{\pi}{3}_{h,x}\|_{T^\frac{\pi}{3}}^2$  & $\approx 26.328$\\
                \rule[-2mm]{0mm}{6mm}{}
                $\|u^\frac{\pi}{3}_{h,y}\|_{T^{\frac{\pi}{3}}}^2$  & $\approx 26.328$\\
                    \rule[-2mm]{0mm}{6mm}{}
                    $|(u^\frac{\pi}{3}_{h,x},u^\frac{\pi}{3}_{h,y})_{T^\frac{\pi}{3}}|$  & $\le 3.049\cdot 10^{-10}$\\
                \rule[-2mm]{0mm}{6mm}{}
            $\lambda_{1,h}^{\frac{\pi}{3}}$  & $\approx 52.656$\\
    $\rho^{\frac{\pi}{3}}$  & $= 122.6133$\\
    \hline
  \end{tabular}
\end{table}

\vspace{1em}
Therefore, we draw the conclusion that the regular triangle minimizes the first Dirichlet eigenvalue $\lambda_1(T)$ among all the triangles with the same diameter.

\subsection{Eigenvalue problem for the Crouzeix--Raviart interpolation error constant}
\label{subsec:CR-constant}
To solve the optimization problem for the Crouzeix--Raviart interpolation error constant, we take $V(T)=V_e(T)$ and 
finite element spaces $V_{h,e}^{\CG}$ and $V_{h,e}^{\CR}$ as follows.
\begin{gather*}
V_{h,e}^{\CG} :=\{v_h \in V_h^\CG~: \int_{e_i} v_h =0 ~ \mbox{ for each edge }e_i \mbox{ of }T\}~,\\
V_{h,e}^{\CR}:=\{v_h \in V_h^\CR~: \int_{e_i} v_h =0 ~ \mbox{ for each edge }e_i \mbox{ of }T\}~.
\end{gather*}
These spaces are utilized as $V_{h}^{\CG}$ and $V_{h}^{\CR}$ in Lemma \ref{lem:l-estimation} to estimate bounds of the eigenvalues $\lambda_k^\theta$. The intervals $I=(0,\pi/3-\varepsilon]$, $J=[\pi/3-\varepsilon,\pi/3]$ and their subdivision are selected as follows:
$$
 \varepsilon=\pi/3000,~ N_1=320,  ~N_2 = 100~.
$$
The subdivision nodes of $I_i=(\theta_{i-1},\theta_i]~(i=1,\cdots,N_1)$ are defined by
\begin{equation*}
   \theta_i=\frac{\pi}{3}*
   \begin{cases}
   0.02*i &\ \mbox{ for }i=0,..,40\\
   0.8+10^{-3}(i-40) &\ \mbox{ for }i=41,\cdots,230\\
   0.99+10^{-4}(i-230) &\ \mbox{ for }i=231,\cdots,320
   \end{cases}.
\end{equation*}
Utilizing Algorithm \ref{algorithm-1-new}, it is confirmed that $\max_{\theta \in I} \lambda_1^\theta>\lambda_1^{\frac{\pi}{3}}$ from the rigorous computation result:
    $$
    27.949\leq\lambda_1^\theta \mbox{ for } \theta\in I, \mbox{ while } \lambda^{\frac{\pi}{3}}_1\leq 27.945.
    $$ 
    
The obtained range of $F(u^\theta)$ for $\theta \in J=[\frac{\pi}{3}-\varepsilon,\pi/3]$ and related quantities are listed in Table \ref{table:problem-2-quantities}.

\begin{table}[H]
  \caption{\label{table:problem-2-quantities} The obtained range of $F(u^\theta)$ and related quantities}
  \centering
  \begin{tabular}{cc}
    \hline
    $[\underline{F}, \overline{F}]$  & $[-20.536,-11.711]$\\
    $F(u_h^{\frac{\pi}{3}})$ & $\approx -16.134 $ \\
    $\mbox{Err}(\lambda_1^\frac{\pi}{3},\lambda_{1,h}^\frac{\pi}{3},\rho^\frac{\pi}{3},\frac{\pi}{3})$  & $\leq 4.401$\\
    $\eta(\lambda_1^{\frac{\pi}{3}},\lambda_{1,h}^{\frac{\pi}{3}},\rho^{\frac{\pi}{3}})$ & $\leq 0.1621$\\    
    $\|u^\frac{\pi}{3}_{h,x}\|_{T^\frac{\pi}{3}}^2$  & $\approx 13.973$\\
    $\|u^\frac{\pi}{3}_{h,y}\|_{T^{\frac{\pi}{3}}}^2$  & $\approx 13.973$\\
    $|(u^\frac{\pi}{3}_{h,x},u^\frac{\pi}{3}_{h,y})_{T^\frac{\pi}{3}}|$  & $\le 2.050\cdot 10^{-10}$\\
    $\lambda_{1,h}^{\frac{\pi}{3}}$  & $\approx 27.946$\\
    $\rho^{\frac{\pi}{3}}$  & $45.042$\\
    \hline
  \end{tabular}
\end{table}

Therefore, it is proved that, among triangles with the same diameter, the maximum value of the interpolation constant $C(T)$ in \eqref{def:C_T} happens when the triangle is a regular one.

\section{Conclusions}\label{section:conclusions}

In this paper, the eigenvalue problems of the Laplacian in triangles are considered under two boundary conditions, and it is proved through rigorous computation that the regular triangle minimizes the first eigenvalue of the Laplacian among triangles with a given diameter.  The case of the Dirichlet boundary condition is a reconfirmation of the consequences derived from the results of P\'{o}lya et al. The result of the non-homogeneous Neumann boundary problem shows that the regular triangle gives the maximum value of the interpolation error constant $C(T)$ among triangles of different shapes.

\medskip

\appendix
\section{Perturbation estimates for $k$-th eigenvalue}

Denote by $T$ a triangular domain in $\mathbb{R}^2$, by $S$ an invertible linear transform on $\mathbb{R}^2$.  
Let $(\tilde x,\tilde y)=S(x,y)$ for $(x,y)\in T$, and $\widetilde{T}$ the triangle obtained by applying $S$ to $T$.
For $v$ over $T$, define $\widetilde v$ over $\widetilde{T}$ by
$\widetilde{v}(\widetilde{x}, \widetilde{y}):=v(x,y)$.
Given $V$ as a vector space of function over $T$, let $\widetilde{V}$ be the space obtained by applying $S$ to the function of $V$.

In the argument below, the function space $V$ can be either of $V_0(T)$, $V_e(T)$, $V_{h,0}^{\CG}$, $V_{h,0}^{\CR}$, $V_{h,e}^{\CG}$, and $V_{h,e}^{\CR}$, as defined in the paper.
Note that the boundary condition associated with the space $V$ are well preserved under the transformation $S$. For example, 
for any $v$ in $V_0(T)$, $\widetilde{V}$ has zero value on the boundary, and for any $v$ in $V_e(T)$, $\widetilde{V}$ has zero integral on each edge of $\widetilde{T}$.

Denote $R(T;v)$ and $R(\widetilde{T};\tilde{v})$ by
$$
R(T;v):=\frac{\|\nabla v\|^2_{{T}}}{\|v\|^2_{T}},\quad
R(\widetilde{T};\tilde{v}):=\frac{\|\tilde{\nabla} \tilde{v}\|^2_{\widetilde{T}}}{\|\tilde{v}\|^2_{\widetilde{T}}}~.
$$

\begin{lemma}\label{lem:k-dimensional}
Let $V$ be a $k$-dimensional subspace of $V$. Then, $\mbox{dim}(\widetilde{V})=k$.

\end{lemma}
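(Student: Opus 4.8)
The plan is to recognize that the correspondence $v \mapsto \widetilde{v}$ is simply the composition operator attached to the invertible linear change of variables $S$, and that composition with an invertible map is a linear isomorphism between the underlying function spaces; since isomorphisms preserve dimension, the conclusion is immediate. Throughout I write $W$ for the given $k$-dimensional subspace (the statement's repeated use of the symbol $V$ is evidently a typo for a generic subspace $W \subseteq V$) and $\widetilde{W}$ for its image under $S$.

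First I would define explicitly the map $\Psi \colon W \to \widetilde{W}$ by $\Psi(v) := \widetilde{v}$, where $\widetilde{v}(\widetilde{x},\widetilde{y}) = v(x,y)$ with $(\widetilde{x},\widetilde{y}) = S(x,y)$; equivalently $\widetilde{v} = v \circ S^{-1}$. Linearity of $\Psi$ is immediate from this formula, since for scalars $a,b$ and functions $v,w \in W$ one has $(av + bw)\circ S^{-1} = a\,(v\circ S^{-1}) + b\,(w\circ S^{-1})$. Moreover, by the very definition of $\widetilde{W}$ given in the excerpt, $\Psi$ is surjective onto $\widetilde{W}$.

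Next I would establish injectivity, which is the only step that actually uses the invertibility of $S$: if $\Psi(v) = 0$, then $v(x,y) = \widetilde{v}\bigl(S(x,y)\bigr) = 0$ for every $(x,y) \in T$, because $S$ maps $T$ bijectively onto $\widetilde{T}$, and hence $v = 0$. Thus $\Psi$ is a linear bijection from $W$ onto $\widetilde{W}$, i.e. a linear isomorphism, and dimension is invariant under isomorphism, giving $\dim(\widetilde{W}) = \dim(W) = k$. Equivalently, and perhaps more transparently for the reader, I would fix a basis $\{v_1,\dots,v_k\}$ of $W$ and check directly that $\{\widetilde{v}_1,\dots,\widetilde{v}_k\}$ spans $\widetilde{W}$ (by linearity of $\Psi$ together with the definition of $\widetilde{W}$) and is linearly independent (from $\sum_i c_i \widetilde{v}_i = 0$ one gets $\widetilde{\sum_i c_i v_i} = 0$, so $\sum_i c_i v_i = 0$ by injectivity, whence all $c_i = 0$); this exhibits a basis of $\widetilde{W}$ of cardinality $k$.

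There is essentially no hard step here: the result is a routine consequence of $S$ being invertible. The single point deserving a line of care is that $\Psi$ genuinely lands in the intended space — that is, $\widetilde{v}$ retains $H^1$-regularity and the boundary condition defining $V$ — but this is precisely what the paragraph preceding the lemma records (the conditions defining $V_0(T)$, $V_e(T)$ and their discrete counterparts $V_{h,0}^{\CG}$, $V_{h,0}^{\CR}$, $V_{h,e}^{\CG}$, $V_{h,e}^{\CR}$ are preserved under $S$), so it may simply be cited rather than reproved.
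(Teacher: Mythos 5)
Your proof is correct. The only substantive point --- that the images of a basis of $W$ remain linearly independent --- you establish by observing that $v\mapsto v\circ S^{-1}$ is injective (since $S$ maps $T$ bijectively onto $\widetilde{T}$, $\widetilde v=0$ forces $v=0$), so the composition operator is a linear isomorphism and dimension is preserved. The paper instead picks an $L^2$-orthogonal basis $v_1,\dots,v_k$ and uses the change-of-variables identity $(\widetilde v_i,\widetilde v_j)_{\widetilde T}=(v_i,v_j)_T\cdot|\det S|=\delta_{ij}\cdot|\det S|$ to conclude that the images are mutually orthogonal and nonzero, hence independent. Your route is slightly more elementary and more general, since it needs no inner-product structure at all and works for any function space stable under the substitution; the paper's route has the minor side benefit of recording the exact scaling of the $L^2$ inner product under $S$, a formula it reuses in the surrounding perturbation estimates. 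You are also right to flag (and to resolve by citation to the preceding paragraph) that $\widetilde v$ stays in the target space, i.e.\ that regularity and the boundary conditions are preserved --- the paper treats this the same way, outside the proof of the lemma. Your reading of the statement's double use of the symbol $V$ as a typo for a generic subspace $W\subseteq V$ matches the intended meaning.
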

\begin{proof}
Let $v_1,..,v_k$ be a $L^2$-orthogonal basis of $V$. 
Note that 
\begin{equation*}
    (\widetilde{v}_i, \widetilde{v}_j)_{\widetilde{T}}=(v_i,v_j)_{T}\cdot|\det S|=\delta_{ij}\cdot|\det S|.
\end{equation*}
It is easy to see that $\widetilde{V}_1,..,\widetilde{V}_k$ is a basis of $\widetilde{V}$.
\end{proof}

\begin{lemma}\label{thm:general-perturbation}
Denote by $\lambda_{\min}(SS^\intercal)$ and $\lambda_{\max}(SS^\intercal)$ be the minimum and maximum eigenvalues of $SS^\intercal$, respectively.
Then, we have
\begin{equation*}
\lambda_{\min}(SS^\intercal)\cdot\lambda_k(\widetilde{T})
\leq \lambda_k(T)
\leq \lambda_{\max}(SS^\intercal)\cdot\lambda_k(\widetilde{T}).
\end{equation*}
\end{lemma}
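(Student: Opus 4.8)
The plan is to reduce the $k$-th eigenvalue inequality to a pointwise comparison of Rayleigh quotients, and then invoke the Courant--Fischer min-max characterization together with the bijection of subspaces from Lemma \ref{lem:k-dimensional}. First I would record the transformation rule for the gradient. Since $\tilde v = v\circ S^{-1}$, the chain rule gives $\tilde\nabla \tilde v = S^{-\intercal}\nabla v$ (evaluated at corresponding points), hence pointwise
\[
|\tilde\nabla \tilde v|^2 = (\nabla v)^\intercal S^{-1}S^{-\intercal}(\nabla v) = (\nabla v)^\intercal (S^\intercal S)^{-1}(\nabla v).
\]
Because $S^\intercal S$ and $SS^\intercal$ share the same positive eigenvalues, the extreme eigenvalues of $(S^\intercal S)^{-1}$ are $1/\lambda_{\max}(SS^\intercal)$ and $1/\lambda_{\min}(SS^\intercal)$, so the Rayleigh-quotient bound for the symmetric matrix $(S^\intercal S)^{-1}$ yields the pointwise estimate
\[
\frac{1}{\lambda_{\max}(SS^\intercal)}\,|\nabla v|^2 \le |\tilde\nabla \tilde v|^2 \le \frac{1}{\lambda_{\min}(SS^\intercal)}\,|\nabla v|^2 .
\]

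Next I would integrate over $T$ using the Jacobian identity $\text{d}\tilde x\,\text{d}\tilde y = |\det S|\,\text{d}x\,\text{d}y$, which applies simultaneously to $\|\tilde\nabla\tilde v\|^2_{\widetilde T}$ and to $\|\tilde v\|^2_{\widetilde T}=|\det S|\,\|v\|^2_T$. The common factor $|\det S|$ cancels in the quotient, so dividing the integrated estimate by $\|v\|_T^2$ gives
\[
\lambda_{\min}(SS^\intercal)\,R(\widetilde T;\tilde v) \le R(T;v) \le \lambda_{\max}(SS^\intercal)\,R(\widetilde T;\tilde v) \qquad \forall v\in V(T)\setminus\{0\}.
\]

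Finally I would pass from Rayleigh quotients to eigenvalues via Courant--Fischer, namely $\lambda_k(T)=\min_{\dim E=k}\max_{0\ne v\in E}R(T;v)$ and the analogous formula for $\widetilde T$. For the upper bound, let $\widetilde E_0$ be a $k$-dimensional minimizing subspace for $\lambda_k(\widetilde T)$; by Lemma \ref{lem:k-dimensional} it equals the image $\widetilde{E^*}$ of a $k$-dimensional subspace $E^*\subset V(T)$, whence
\[
\lambda_k(T)\le \max_{0\ne v\in E^*}R(T;v) \le \lambda_{\max}(SS^\intercal)\max_{0\ne \tilde v\in \widetilde{E^*}}R(\widetilde T;\tilde v)=\lambda_{\max}(SS^\intercal)\,\lambda_k(\widetilde T).
\]
For the lower bound, let $E_0$ be a minimizing subspace for $\lambda_k(T)$; then $\widetilde{E_0}$ is an admissible $k$-dimensional competitor for $\widetilde T$ (again by Lemma \ref{lem:k-dimensional}), so
\[
\lambda_k(T)=\max_{0\ne v\in E_0}R(T;v)\ge \lambda_{\min}(SS^\intercal)\max_{0\ne \tilde v\in \widetilde{E_0}}R(\widetilde T;\tilde v)\ge \lambda_{\min}(SS^\intercal)\,\lambda_k(\widetilde T).
\]

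The routine parts are the chain-rule computation and the Jacobian bookkeeping. The step requiring the most care is the min-max argument, where the two inequalities must be paired with the correct choice of optimal subspace (optimal for $\widetilde T$ in the upper bound, optimal for $T$ in the lower bound), and where the bijection $E\mapsto\widetilde E$ of Lemma \ref{lem:k-dimensional} must be invoked to guarantee both admissibility and dimension preservation. A subtle point underlying the whole argument, already secured by the remarks preceding the lemma, is that the transformed functions genuinely lie in $\widetilde V(\widetilde T)$ with the prescribed boundary condition preserved, so that the min-max is taken over the correct function space in each geometry.
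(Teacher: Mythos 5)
Your proposal is correct and follows essentially the same route as the paper's own proof: a pointwise comparison of $|\nabla v|^2$ and $|\widetilde{\nabla}\tilde v|^2$ via the extreme eigenvalues of $SS^\intercal$, cancellation of the Jacobian factor in the Rayleigh quotient, and the min--max principle combined with the dimension-preservation of Lemma \ref{lem:k-dimensional}. Your explicit pairing of each inequality with the appropriate optimal subspace is just a slightly more spelled-out version of the paper's direct comparison of the two min--max expressions.
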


\begin{proof}
Since we have $\nabla v=S^\intercal\widetilde{\nabla}\tilde{v}$, it holds that
\begin{equation*}
\lambda_{\min}(SS^\intercal)\cdot|\widetilde{\nabla}\tilde{v}|^2
\leq |\nabla v|^2 \leq \lambda_{\max}(SS^\intercal)\cdot|\widetilde{\nabla}\tilde{v}|^2.
\end{equation*}
Therefore, we have
\[\displaystyle {\lambda_{\min}(SS^\intercal)}\|\nabla\tilde  v\|^2_{
\widetilde T}\cdot |\det S^{-1}|\leq  \|\nabla v\|^2_{T}\leq {\lambda_{\max}(SS^\intercal)} \|\nabla\widetilde{v}\|^2_{\widetilde T}\cdot |\det S^{-1}|.\] 
Note that $\|v\|^2_{T}=\|\widetilde{v}\|^2_{\widetilde{T}}\cdot |\det S^{-1}|$. Hence, for any $v\in V(T)$,
$$
\lambda_{\min}(SS^\intercal)\cdot R(\widetilde{T};\tilde{v})
\leq R(T;v) \leq \lambda_{\max}(SS^\intercal)\cdot R(\widetilde{T};\tilde{v}).
$$
The mapping $S:V(T)\ni v\mapsto \widetilde{v}\in V(\widetilde{T})$ is injective; see, Theorem 3.41 of \cite{adams2003sobolev}.
By applying Lemma \ref{lem:k-dimensional} and the above inequality, we have
\begin{equation*}
    \lambda_{\min}(SS^\intercal)\cdot \min_{\widetilde{V}^k\subset \widetilde{V}(\widetilde{T})}\max_{\widetilde{v}\in \widetilde{V}^k}R(\widetilde{T};\tilde{v})\leq\min_{V^k\subset V(T)}\max_{v\in V^k}R(T;v),
\end{equation*}
and
\begin{equation*}
    \min_{V^k\subset V(T)}\max_{v\in V^k}R(T;v)\leq\lambda_{\max}(SS^\intercal)\cdot\min_{\widetilde{V}^k\subset \widetilde{V}(\widetilde{T})}\max_{\widetilde{v}\in \widetilde{V}^k}R(\widetilde{T};\tilde{v}).
\end{equation*}
where $\widetilde{V}^k$ and $V^k$ are $k$-dimensional linear subspaces of $\widetilde{V}(\widetilde{T})$ and $V(T)$, respectively.
Thus, by the min-max principle, we have
\begin{equation*}
\lambda_{\min}(SS^\intercal)\cdot\lambda_k(\widetilde{T})
\leq \lambda_k(T)
\leq \lambda_{\max}(SS^\intercal)\cdot\lambda_k(\widetilde{T}).
\end{equation*}
\end{proof}

\medskip

In the following corollary, the statement in Lemma \ref{lem:vertival_monotonicity} is verified.
\begin{corollary}
Given 
$O=(0,0)$, $A=(1,0)$, $B=(x,y_1)$, $\widetilde{B}=(x,y_2)$ such that $y_1\le y_2$.
Let $T=OAB$ and $\widetilde{T}=OA\widetilde{B}$.
Then, it holds that $\lambda_k(T)\geq \lambda_k(\widetilde{T})$.
\end{corollary}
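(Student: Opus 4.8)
The plan is to realize $\widetilde{T}$ as the image of $T$ under an explicit linear map and then invoke Lemma~\ref{thm:general-perturbation}. First I would determine the unique linear transform $S$ that fixes $O$ and $A$ and sends $B=(x,y_1)$ to $\widetilde{B}=(x,y_2)$. Writing $S=\begin{pmatrix} a & b \\ c & d\end{pmatrix}$, the requirement $S(1,0)^\intercal=(1,0)^\intercal$ forces $a=1$ and $c=0$, after which $S(x,y_1)^\intercal=(x,y_2)^\intercal$ forces $b=0$ and $d=y_2/y_1$; here one uses $y_1>0$, which holds since $B$ is a genuine triangle vertex. Hence $S=\diag(1,\,y_2/y_1)$, and by linearity this map carries $T=OAB$ onto $\widetilde{T}=OA\widetilde{B}$, while fixing the shared edge $OA$.

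Next I would compute $SS^\intercal=\diag\!\big(1,\,(y_2/y_1)^2\big)$, a diagonal matrix whose eigenvalues are simply $1$ and $(y_2/y_1)^2$. Since $0<y_1\le y_2$, we have $y_2/y_1\ge 1$ and therefore $(y_2/y_1)^2\ge 1$, which identifies $\lambda_{\min}(SS^\intercal)=1$ and $\lambda_{\max}(SS^\intercal)=(y_2/y_1)^2$. This is the only place where the hypothesis $y_1\le y_2$ enters: it fixes which of the two eigenvalues is the minimum.

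Finally, applying Lemma~\ref{thm:general-perturbation} with this choice of $S$ gives the two-sided bound $\lambda_{\min}(SS^\intercal)\cdot\lambda_k(\widetilde{T})\le \lambda_k(T)\le \lambda_{\max}(SS^\intercal)\cdot\lambda_k(\widetilde{T})$, and reading off the left inequality yields
\begin{equation*}
\lambda_k(\widetilde{T}) = \lambda_{\min}(SS^\intercal)\cdot\lambda_k(\widetilde{T}) \le \lambda_k(T),
\end{equation*}
which is exactly the desired conclusion $\lambda_k(T)\ge\lambda_k(\widetilde{T})$.

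I expect there to be essentially no hard step: the argument is a direct specialization of Lemma~\ref{thm:general-perturbation}. The only points requiring minor care are the correct identification of $S$ (rather than its inverse) so that the scaling factor satisfies $y_2/y_1\ge 1$, and the observation that the boundary conditions defining $V_0$ and $V_e$ are preserved under $S$ — but the latter is already supplied by the discussion preceding Lemma~\ref{thm:general-perturbation}, since $S$ is an invertible linear map sending edges of $T$ to edges of $\widetilde{T}$.
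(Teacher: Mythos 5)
Your proof is correct and matches the paper's own argument: both apply Lemma~\ref{thm:general-perturbation} with the transformation $S=\diag(1,\,y_2/y_1)$ and use $\lambda_{\min}(SS^\intercal)=1$ to read off $\lambda_k(\widetilde{T})\le\lambda_k(T)$. Your derivation of $S$ and the eigenvalues of $SS^\intercal$ simply makes explicit what the paper leaves as ``easy to see.''
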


\begin{proof}
It is easy to draw the conclusion by applying Lemma \ref{thm:general-perturbation} along with the transformation  $S=\begin{pmatrix}1 & 0\\ 0 & y_2/y_1\end{pmatrix}$, which satisfies $\lambda_{\min}(SS^\intercal) = 1$. 
\end{proof}

Denote by $T^\theta$ be a triangular domain with vertices as $O=(0,0)$, $A=(1,0)$, $B=(\cos\theta,\sin\theta)$ with $\theta\in (0,\pi)$. In the following corollary, the statement in Remark \ref{rem:h-perturbation} is verified.
\begin{corollary}
For the angles $\theta,\tilde\theta\in (0,\pi)$, let $\widetilde {B}(\cos\tilde\theta,\sin\tilde\theta)$ be a perturbation of $B(\cos\theta,\sin\theta)$. Then, we have
\begin{equation}
\min\left\{\frac{\cos\tilde\theta-1}{\cos\theta-1},\frac{\cos\tilde\theta+1}{\cos\theta+1}\right\}
\cdot\lambda^{\theta}_k
\leq
\lambda^{\tilde{\theta}}_k\leq
\max\left\{\frac{\cos\tilde\theta-1}{\cos\theta-1},\frac{\cos\tilde\theta+1}{\cos\theta+1}\right\}
\cdot\lambda^{\theta}_k.
\end{equation}

\end{corollary}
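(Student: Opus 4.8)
The plan is to deduce the statement directly from Lemma~\ref{thm:general-perturbation}, taking for $S$ the explicit linear map $S_{\theta,\tilde\theta}$ introduced before Lemma~\ref{perturbation}. Since $S_{\theta,\tilde\theta}$ fixes $O$ and $A$ and sends $B=(\cos\theta,\sin\theta)$ to $\widetilde B=(\cos\tilde\theta,\sin\tilde\theta)$, it maps $T^\theta$ onto $T^{\tilde\theta}$; thus with $T=T^\theta$ and $\widetilde T=T^{\tilde\theta}$, Lemma~\ref{thm:general-perturbation} yields $\lambda_{\min}(S_{\theta,\tilde\theta}S_{\theta,\tilde\theta}^\intercal)\,\lambda_k^{\tilde\theta}\le\lambda_k^{\theta}\le\lambda_{\max}(S_{\theta,\tilde\theta}S_{\theta,\tilde\theta}^\intercal)\,\lambda_k^{\tilde\theta}$. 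The whole statement therefore reduces to identifying the two eigenvalues of the symmetric $2\times2$ matrix $S_{\theta,\tilde\theta}S_{\theta,\tilde\theta}^\intercal$ in closed form.

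Writing $p=(\cos\tilde\theta-\cos\theta)/\sin\theta$ and $q=\sin\tilde\theta/\sin\theta$, I would compute $S_{\theta,\tilde\theta}S_{\theta,\tilde\theta}^\intercal=\begin{pmatrix}1+p^2 & pq\\ pq & q^2\end{pmatrix}$, whose determinant is $q^2=\sin^2\tilde\theta/\sin^2\theta$ and whose trace is $1+p^2+q^2=(2-2\cos\theta\cos\tilde\theta)/\sin^2\theta$. The claim is that the eigenvalues are exactly $c_1:=(\cos\tilde\theta-1)/(\cos\theta-1)$ and $c_2:=(\cos\tilde\theta+1)/(\cos\theta+1)$. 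This is checked by Vieta's formulas: using $\cos^2\alpha-1=-\sin^2\alpha$ one gets $c_1 c_2=(\cos^2\tilde\theta-1)/(\cos^2\theta-1)=\sin^2\tilde\theta/\sin^2\theta$, matching the determinant, and a short expansion gives $c_1+c_2=(2-2\cos\theta\cos\tilde\theta)/\sin^2\theta$, matching the trace. Hence $\{\lambda_{\min},\lambda_{\max}\}(S_{\theta,\tilde\theta}S_{\theta,\tilde\theta}^\intercal)=\{\min(c_1,c_2),\max(c_1,c_2)\}$, and substituting into the inequality above proves the estimate.

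It is worth recording the geometric reason behind the clean factorization, since it both guides and double-checks the computation. Both $T^\theta$ and $T^{\tilde\theta}$ are isosceles with the two legs at $O$ of length $1$; in an orthonormal frame whose axes are the angle bisector at $O$ and its perpendicular, $T^\theta$ has half-base $\sin(\theta/2)$ and height $\cos(\theta/2)$, so $T^\theta$ and $T^{\tilde\theta}$ are related by the diagonal scaling $\diag(\sin(\tilde\theta/2)/\sin(\theta/2),\,\cos(\tilde\theta/2)/\cos(\theta/2))$. Because eigenvalues of the Laplacian are isometry invariant, $S_{\theta,\tilde\theta}$ differs from this diagonal map only by a rotation and therefore has the same singular values; squaring them and applying the half-angle identities $1-\cos=2\sin^2(\cdot/2)$ and $1+\cos=2\cos^2(\cdot/2)$ recovers $c_1$ and $c_2$. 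One could equally run the proof with this diagonal map in place of $S_{\theta,\tilde\theta}$.

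I expect the main work to be the eigenvalue identification, i.e.\ recognizing the factored closed form $c_1,c_2$; the trace/determinant route together with Vieta keeps this routine, and the isosceles picture explains why such a factorization must occur. The one point demanding care is the orientation: applying Lemma~\ref{thm:general-perturbation} with $S_{\theta,\tilde\theta}$ naturally places $\lambda_k^\theta$ on the bounded side, so to read off the inequality in the direction stated one must track which triangle is the image under $S$ and assign the $\min$/$\max$ labels accordingly, keeping in mind that $c_1$ and $c_2$ are not symmetric under interchanging $\theta$ and $\tilde\theta$.
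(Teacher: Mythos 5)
Your computation is correct and follows the same route as the paper's own proof: apply Lemma~\ref{thm:general-perturbation} with $S=S_{\theta,\tilde\theta}$ and identify the eigenvalues of $SS^\intercal$ as $c_1=(\cos\tilde\theta-1)/(\cos\theta-1)$ and $c_2=(\cos\tilde\theta+1)/(\cos\theta+1)$; both your trace/determinant verification and the isosceles singular-value picture are valid. The genuine gap is the very last step, which you flag as ``the one point demanding care'' but do not resolve. What the lemma yields is
\begin{equation*}
\min\{c_1,c_2\}\,\lambda_k^{\tilde\theta}\;\le\;\lambda_k^{\theta}\;\le\;\max\{c_1,c_2\}\,\lambda_k^{\tilde\theta},
\end{equation*}
and solving this for $\lambda_k^{\tilde\theta}$ produces the coefficients $1/\max\{c_1,c_2\}=\min\{1/c_1,1/c_2\}$ and $1/\min\{c_1,c_2\}=\max\{1/c_1,1/c_2\}$, i.e.\ the $\min$ and $\max$ of $\frac{\cos\theta-1}{\cos\tilde\theta-1}$ and $\frac{\cos\theta+1}{\cos\tilde\theta+1}$ --- the reciprocals of the printed coefficients. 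Since $c_1c_2=\sin^2\tilde\theta/\sin^2\theta\neq1$ in general, no reassignment of the $\min$/$\max$ labels converts one set into the other; and applying the lemma in the opposite orientation with $S_{\tilde\theta,\theta}\colon T^{\tilde\theta}\to T^{\theta}$ puts $\lambda_k^{\tilde\theta}$ in the middle but yields the eigenvalues $1/c_1,1/c_2$ for $S_{\tilde\theta,\theta}S_{\tilde\theta,\theta}^\intercal$, so you land on the same reciprocal form either way.

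Moreover, the inequality exactly as printed cannot be rescued, because it is false: take $\theta=\pi/3$ and $\tilde\theta\to\pi$. Then $\max\{c_1,c_2\}=\max\{2(1-\cos\tilde\theta),\tfrac{2}{3}(1+\cos\tilde\theta)\}\to4$, so the claimed upper bound stays below $4\lambda_1^{\pi/3}$, while $T^{\tilde\theta}$ degenerates to a sliver contained in the strip $0<y<\sin\tilde\theta$, whence (for $V=V_0$) $\lambda_1^{\tilde\theta}\ge\pi^2/\sin^2\tilde\theta\to\infty$. What your argument actually establishes --- and what the paper's own one-line proof establishes as well, since it silently performs the same inversion --- is the corollary with the fractions inverted, i.e.\ with the $\min$ and $\max$ taken over $\frac{\cos\theta-1}{\cos\tilde\theta-1}$ and $\frac{\cos\theta+1}{\cos\tilde\theta+1}$. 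You should state that corrected form as your conclusion (it is all that the subsequent algorithms need) rather than asserting that the printed estimate follows by substitution.
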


\begin{proof}
The transform $S$ that maps  $T=OAB$ to $(\tilde x,\tilde y)$ is given by
\begin{equation*}
S=\begin{pmatrix}1 & (\cos\tilde\theta-\cos\theta)/\sin\theta\\
0 &  \sin\tilde\theta/\sin\theta\end{pmatrix}.
\end{equation*}
Note that the eigenvalues of matrix $SS^\intercal$ are given by
$$
\lambda(SS^\intercal) =
\frac{\cos\tilde\theta-1}{\cos\theta-1},~~\frac{\cos\tilde\theta+1}{\cos\theta+1}.
$$
Thus, we draw the conclusion by using Lemma \ref{thm:general-perturbation}.
\end{proof}

\section*{Acknowledgments}
The last author is supported by Japan Society for the Promotion of Science: Fund for the Promotion of Joint International Research (Fostering Joint International Research (A)) 20KK0306, 
Grant-in-Aid for Scientific Research (B) 20H01820, 21H00998, and Grant-in-Aid for Scientific Research (C) 18K03411. 

\bibliographystyle{siamplain}
\bibliography{references}

\begin{thebibliography}{10}

\bibitem{adams2003sobolev}
{\sc R.~A. Adams and J.~J. Fournier}, {\em Sobolev spaces}, Elsevier, 2003.

\bibitem{arbon2022global}
{\sc R.~Arbon}, {\em Global and local bounds on the fundamental ratio of
  triangles and quadrilaterals}, arXiv preprint arXiv:2207.05814,  (2022).

\bibitem{birkhoff1966rayleigh}
{\sc G.~Birkhoff, C.~De~Boor, B.~Swartz, and B.~Wendroff}, {\em Rayleigh-ritz
  approximation by piecewise cubic polynomials}, SIAMJ. Numer. Anal., 3 (1966),
  pp.~188--203.

\bibitem{cances2020guaranteed}
{\sc E.~Canc{\`e}s, G.~Dusson, Y.~Maday, B.~Stamm, and M.~Vohral{\'\i}k}, {\em
  Guaranteed a posteriori bounds for eigenvalues and eigenvectors:
  multiplicities and clusters}, Math. Comput., 89 (2020), pp.~2563--2611.

\bibitem{evans2010partial}
{\sc L.~C. Evans}, {\em Partial differential equations}, American Mathematical
  Soc., 2010.

\bibitem{hadamard1908memoire}
{\sc J.~Hadamard}, {\em M{\'e}moire sur le probl{\`e}me d'analyse relatif {\`a}
  l'{\'e}quilibre des plaques {\'e}lastiques encastr{\'e}es}, vol.~33,
  Imprimerie nationale, 1908.

\bibitem{henrot2006extremum}
{\sc A.~Henrot}, {\em Extremum problems for eigenvalues of elliptic operators},
  Springer Science \& Business Media, 2006.

\bibitem{henrot2017shape}
{\sc A.~Henrot}, {\em Shape optimization and spectral theory}, in Shape
  optimization and spectral theory, De Gruyter Open Poland, 2017.

\bibitem{Kikuchi+Liu2007}
{\sc F.~Kikuchi and X.~Liu}, {\em Estimation of interpolation error constants
  for the ${P}_0$ and ${P}_1$ triangular finite element}, Comput. Methods Appl.
  Mech. Eng., 196 (2007), pp.~3750--3758.

\bibitem{534dfed29e6f41038665a25c2e995659}
{\sc R.~Laugesen and B.~Siudeja}, {\em Dirichlet eigenvalue sums on triangles
  are minimal for equilaterals}, Commun. Anal. Geom., 19 (2010).

\bibitem{Liao-2019}
{\sc S.~Liao, Y.~Shu, and X.~Liu}, {\em Optimal estimation for the
  fujino–morley interpolation error constants}, Jpn. J. Ind. Appl. Math., 36
  (2019), pp.~521--542.

\bibitem{liu2015framework}
{\sc X.~Liu}, {\em A framework of verified eigenvalue bounds for self-adjoint
  differential operators}, Appl. Math. Comput., 267 (2015), pp.~341--355.

\bibitem{liu-kikuchi-2010}
{\sc X.~Liu and F.~Kikuchi}, {\em Analysis and estimation of error constants
  for ${P}_0$ and ${P}_1$ interpolations over triangular finite elements}, J.
  Math. Sci. Univ. Tokyo, 17 (2010), pp.~27--78.

\bibitem{liu2022fully}
{\sc X.~Liu and T.~Vejchodsk{\`y}}, {\em Fully computable a posteriori error
  bounds for eigenfunctions}, Numer. Math.,  (2022), pp.~1--39.

\bibitem{polya1951isoperimetric}
{\sc G.~P{\'o}lya and G.~Szeg{\"o}}, {\em Isoperimetric inequalities in
  mathematical physics}, no.~27, Princeton University Press, 1951.

\bibitem{rousselet1983shape}
{\sc B.~Rousselet}, {\em Shape design sensitivity of a membrane}, J. Optim.
  Theory Appl., 40 (1983), pp.~595--623.

\bibitem{siudeja2010isoperimetric}
{\sc B.~Siudeja}, {\em Isoperimetric inequalities for eigenvalues of
  triangles}, Indiana Univ. Math. J.,  (2010), pp.~1097--1120.

\end{thebibliography}
\end{document}